\theoremstyle{definition}
\newtheorem{thm}{Theorem}[section]
\newtheorem{Def}[thm]{Definition}
\newtheorem{pro}[thm]{Proposition}
\newtheorem{cor}[thm]{Corollary}
\newtheorem{lem}[thm]{Lemma}
\newtheorem{ex}[thm]{Example}
\newtheorem{rem}[thm]{Remark}
\theoremstyle{definition}
\begin{document}

\title{Fundamental group of uniquely ergodic Cantor minimal systems}
\author{Norio Nawata}
\address[Norio Nawata]{Institute of Mathematics for Industry, 
Kyushu University, Motooka, 
Fukuoka, 819-0395,  Japan}      
\email{n-nawata@math.kyushu-u.ac.jp}
\keywords{Fundamental group; Orbit equivalence; Brown's Lemma}
\subjclass[2000]{Primary 37B05; Secondary 37A20, 46L55.}
\begin{abstract}
We introduce the fundamental group ${\mathcal F}(\mathcal{R}_{G, \varphi})$ of a uniquely ergodic 
Cantor minimal $G$-system $\mathcal{R}_{G, \varphi}$ where $G$ is a countable discrete group. 
We compute fundamental groups of several uniquely ergodic Cantor minimal $G$-systems. 
We show that if $\mathcal{R}_{G, \varphi}$ arises from a free action $\varphi$ of a finitely 
generated abelian group, then there exists a unital countable subring $R$ of $\mathbb{R}$ such 
that $\mathcal{F}(\mathcal{R}_{G, \varphi})=R_{+}^\times$. 
We also consider the relation between fundamental groups of uniquely ergodic 
Cantor minimal $\mathbb{Z}^n$-systems and fundamental groups of crossed product $C^*$-algebras 
$C(X)\rtimes_{\varphi} \mathbb{Z}^n$. 
\end{abstract}
\maketitle

\section{Introduction} 
Let $M$ be a factor of type $\mathrm{II}_1$ with a normalized trace $\tau$. 
Murray and von Neumann introduced the fundamental group ${\mathcal F}(M)$ of $M$ in \cite{MN}. 
The fundamental group of $M$ is defined as the set of the numbers $\tau \otimes Tr(p)$ 
for some projection $p \in M_n(M)$ such that $pM_n(M)p$ is isomorphic to $M$ where 
$Tr$ is the usual unnormalized trace on $M_n(\mathbb{C})$. 
They showed that if $M$ is  hyperfinite, then ${\mathcal F}(M) = {\mathbb R_+^{\times}}$. 
Since then there has been many works on the computation of fundamental groups. 
Voiculescu \cite{Vo} showed that ${\mathcal F}(L(\mathbb{F}_{\infty}))$ of the group factor 
of the free group $\mathbb{F}_{\infty}$ contains the positive rationals and 
Radulescu proved that 
${\mathcal F}(L(\mathbb{F}_{\infty})) = {\mathbb R}_+^{\times}$ in 
\cite{Ra}. Connes \cite{Co} showed that if $G$ is an ICC group with property 
(T), then  ${\mathcal F}(L(G))$ is a countable group. Popa 
showed that any countable subgroup of $\mathbb R_+^{\times}$ 
can be realized as the fundamental group of some factor of type $\mathrm{II}_1$ in \cite{Po1}. 
Furthermore Popa and Vaes \cite{PV} exhibited a large family $\mathcal{S}$ 
of subgroups of $\mathbb{R}_{+}^\times$, containing $\mathbb{R}_{+}^\times$ 
itself, all of its countable subgroups, as well as uncountable subgroups with 
any Hausdorff dimension in $(0,1)$, such that for each $H\in\mathcal{S}$ 
there exist many free ergodic measure preserving actions of $\mathbb{F}_{\infty}$ 
for which the associated $\mathrm{II}_1$ factor $M$ has the fundamental group equal to $H$. 

Popa's results are based on the study of rigidity properties of II$_1$ factors 
$L^{\infty} (X)\rtimes_{T} G$ and orbit equivalence relations $\mathcal{R}_{G,T}$ arising from 
(free) ergodic measure preserving actions $T$ of countable groups $G$ on probability measure 
spaces $(X, \mu)$ via the group measure space construction in \cite{MN1}. 
(See also \cite{FM1} and \cite{FM2} for its generalization.) 
The fundamental group of $\mathcal{R}_{G, T}$ can also be defined as the set of numbers 
$\mu \times \delta (Y)$ for some measurable set $Y$ in $X\times \{1,..,n\}$ such that 
$\mathcal{R}_{G, T}^{n}|_{Y}$ is orbit equivalent to $\mathcal{R}_{G, T}$ where 
$\delta$ is the counting measure. It is easy to see that $\mathcal{F}(\mathcal{R}_{G, T})\subseteq 
\mathcal{F}(L^{\infty}(X) \rtimes_{T} G)$. This inclusion may be strict. (See 6.1 in \cite{Pspectral}.) 

The result in \cite{CFW} implies $\mathcal{F}(\mathcal{R}_{G, T})=\mathbb{R}_{+}^\times$ 
whenever $G$ is amenable. 
Gefter and Golodets \cite{GG} showed that there exist ergodic measure preserving actions having 
trivial fundamental group before Popa's result \cite{PoAnn}. 
Moreover Gaboriau \cite{G} showed that if $\mathcal{R}_{\mathbb{F}_{n}, T}$ is an orbit equivalence 
relation arising from a free ergodic measure preserving action $T$ of a non-amenable free group of 
finite rank, then $\mathcal{F}(\mathcal{R}_{\mathbb{F}_{n}, T})=\{1 \}$. 
On the contrary, Popa and Vaes showed that for each $H\in\mathcal{S}$ 
there exist many orbit equivalence relations $\mathcal{R}_{\mathbb{F}_{\infty}, T}$ arising from 
free ergodic measure preserving actions $T$ of $\mathbb{F}_{\infty}$ such that 
$\mathcal{F}(\mathcal{R}_{\mathbb{F}_{\infty}, T})=H$. 
For a countable discrete group $G$, it is an interesting problem to consider how $G$ 
affect $\mathcal{F}(\mathcal{R}_{G,T})$. 

Watatani and the author introduced the fundamental group $\mathcal{F}(A)$ 
of a simple unital $C^*$-algebra $A$ with a normalized trace $\tau$ 
based on the computation  of Picard groups by Kodaka \cite{kod1}, \cite{kod2} and \cite{kod3}. 
The fundamental group ${\mathcal F}(A)$ is 
defined as the set of 
the numbers $\tau \otimes Tr(p)$ for some projection 
$p \in M_n(A)$ such that $pM_n(A)p$ is isomorphic to $A$. 
We computed the fundamental groups of several $C^*$-algebras and showed 
that any countable subgroup of $\mathbb{R}_+^\times$ 
can be realized as the fundamental group of a separable simple unital $C^*$-algebra 
with a unique trace in \cite{NW} and \cite{NW2}. 
Note that the fundamental groups of separable simple unital $C^*$-algebras are countable. 

We consider topological dynamical systems on the Cantor set in this paper. 
Giordano, Putnam and Skau classified Cantor minimal $\mathbb{Z}$-systems up to (topological) 
orbit equivalence \cite{GPS1}. 
They also introduced the notion of strong orbit equivalence for Cantor minimal 
$\mathbb{Z}$-system and showed that two Cantor minimal $\mathbb{Z}$-system are strong orbit 
equivalent if and only if their associated $C^*$-algebras are isomorphic. 
Giordano, Matui, Putnam and Skau extends the classification up to orbit equivalence of 
Cantor minimal systems arising from actions of finitely generated abelian groups 
\cite{GMPS1} and \cite{GMPS2}. 

In this paper we introduce the fundamental group of a uniquely ergodic 
Cantor minimal $G$-system $\mathcal{R}_{G, \varphi}$ where $G$ is a countable discrete group. 
The fundamental group $\mathcal{F}(\mathcal{R}_{G, \varphi})$ of a uniquely ergodic Cantor 
minimal $G$-system $\mathcal{R}_{G, \varphi}$ is defined as the set of numbers 
$\mu \times \delta (U)$ for some clopen set $U$ in $X\times \{1,..,n\}$ such that 
$\mathcal{R}_{G, \varphi}^{n}|_{U}$ is (topologically) orbit equivalent to 
$\mathcal{R}_{G, \varphi}$ where $\mu$ is a unique invariant probability measure and $\delta$ is 
the counting measure. We show that $\mathcal{F}(\mathcal{R}_{G, \varphi})$ is a countable 
multiplicative subgroup of $\mathbb{R}_{+}^\times$. 
Note that we cannot show this theorem in the same way as in the case of ergodic measure 
preserving actions of countable groups on probability measure spaces because we do not know 
whether the analogous result of Hopf equivalence theorem (see, for example, Proposition 3.3 
in \cite{FM1}) is true for Cantor minimal $G$-systems. Moreover we do not know whether 
$\mu\times \delta (U_1 )=\mu\times \delta (U_2)$ implies that 
$\mathcal{R}_{G, \varphi}^{n}|_{U_1}$ is orbit equivalent to 
$\mathcal{R}_{G, \varphi}^{n}|_{U_2}$ in general. 

We show that for any unital countable subring $R$ of $\mathbb{R}$, 
there exists a Cantor minimal $\mathbb{Z}$-system $\mathcal{R}_{\mathbb{Z}, \varphi}$ such that 
$\mathcal{F}(\mathcal{R}_{\mathbb{Z}, \varphi})=R_{+}^\times$. 
Conversely, we also show that if $\mathcal{R}_{G, \varphi}$ arises from a free action of a finitely 
generated abelian group, then there exists a unital countable subring $R$ of $\mathbb{R}$ such 
that $\mathcal{F}(\mathcal{R}_{G, \varphi})=R_{+}^\times$. 
Therefore $\{9^n:n\in\mathbb{Z}\}$ cannot be realized as the 
fundamental group of a Cantor minimal system in this class. 

We do not know whether there exists a relation between orbit 
equivalence of Cantor minimal $G$-systems and $C^*$-isomorphism of associated $C^*$-algebras 
in general. 
We show that if $\mathcal{R}_{\mathbb{Z}^n, \varphi}$ arises from a free minimal action $\varphi$ 
of $\mathbb{Z}^n$ on a Cantor set $X$ , then $\mathcal{F}(C(X)\rtimes_{\varphi} \mathbb{Z}^n)
\subseteq \mathcal{F}(\mathcal{R}_{\mathbb{Z}^n, \varphi})$. 
Note that we do not know whether $C(X)\rtimes_{\varphi} \mathbb{Z}^2$ belongs to classifiable 
classes by $K$-groups. 
We also show that there exists a Cantor minimal $\mathbb{Z}$-system $R_{\mathbb{Z}, \varphi}$ 
such that $\mathcal{F}(C(X)\rtimes_{\varphi} \mathbb{Z})\neq \mathcal{F}(\mathcal{R}_{\mathbb{Z}, \varphi})$. 

In section \ref{sec:examples} 
we compute fundamental groups of several uniquely ergodic Cantor minimal systems. 
Some computations suggest that the fundamental groups of Cantor minimal systems have 
arithmetical flavor compared with the case of ergodic measure preserving actions on 
probability measure spaces.

\section{Cantor minimal systems}\label{sec:minimal system}
In this section we review some definitions and results on Cantor minimal systems. 
See, for example, \cite{GMPS2} and \cite{GPS2} for details. 
Let $X$ be the Cantor set, that is, $X$ is a compact separable totally disconnected metric 
space without isolated points. For a free action $\varphi$ of a countable discrete group $G$ on $X$ 
by homeomorphisms (we call this dynamical system the \textit{Cantor $G$-system}), 
an orbit equivalence relation $\mathcal{R}_{G, \varphi}$ is defined by 
$$
\mathcal{R}_{G, \varphi} =\{(x,\varphi_{g}(x))\in X\times X:x\in X, g\in G \} .
$$
For two orbit equivalence relations $\mathcal{R}_1$ on $X_1$ and 
$\mathcal{R}_2$ on $X_2$, we say that 
$\mathcal{R}_1$ is \textit{(topologically) orbit equivalent} to 
$\mathcal{R}_2$ if there exists a homeomorphism $F$ of $X_1$ onto 
$X_2$ such that $F\times F(\mathcal{R}_1)=\mathcal{R}_2$. 
We denote by $\mathrm{Aut}(\mathcal{R})$ the set of 
homeomorphism of $X$ such that $F\times F(\mathcal{R})=\mathcal{R}$. 
An orbit equivalence relation $\mathcal{R}$ on $X$ is said to be \textit{minimal} if 
$\mathcal{R}[x]=\{y\in X:(x,y)\in \mathcal{R}\}$ is dense in $X$ 
for any $x\in X$. 
An \textit{$\mathcal{R}$-invariant measure} is a Borel measure $\mu$ on $X$ such 
that $\mu$ is $h$-invariant for any $h\in \mathrm{Homeo}(X)$ satisfying 
$(x,h(x))\in \mathcal{R}$ for all $x\in X$. 
Let $M(\mathcal{R})$ denote the set of $\mathcal{R}$-invariant probability measures. 
An orbit equivalence relation $\mathcal{R}$ is said to be \textit{uniquely ergodic} if 
$M(\mathcal{R})$ has exactly one element. 
These definitions coincide with usual definitions for topological dynamical systems. 

Assume that $M(\mathcal{R})\neq \phi$. 
Let $D_{m}(X, \mathcal{R})$ be a quotient group of $C(X,\mathbb{Z})$ 
by 
$
\{f\in C(X,\mathbb{Z}):\int_{X}fd\mu =0\;\mathrm{for}\;\mathrm{all}\;\mu \in M
(\mathcal{R}) \}
$. 
This countable abelian group $D_m(\mathcal{R})$ has an order structure 
such that the positive cone $D_m(\mathcal{R})^{+}$ is 
the set of all $[f]$, where $f\geq 0$. For a clopen set $U$ of $X$, we denote by 
$1_{U}$ the characteristic function on $U$, that is, $1_{U} (x)=1$ if 
$x\in U$ and 
$1_{U} (x)=0$ if $x\notin U$. 
We say that two triples $(D_m(\mathcal{R}_1),D_m(\mathcal{R}_1)^{+}, [1]_{X_1})$ and 
$(D_m(\mathcal{R}_2),D_m(\mathcal{R}_2)^{+}, [1]_{X_2})$  
are \textit{isomorphic} if there exists an order isomorphism $\psi$ of 
$D_m(\mathcal{R}_1)$ onto $D_m(\mathcal{R}_2)$ such that 
$\psi ([1_{X_1}]) =[1_{X_2}]$. 
A triple $(D_m(\mathcal{R}),D_m(\mathcal{R})^{+}, [1]_{X})$ is an orbit equivalence invariant 
for Cantor minimal systems. 
Giordano, Putnam, Matui and Skau actually showed that this triple is a complete invariant 
for AF relations and the case where $G$ is a finitely generated abelian group. 
(See Theorem 2.5 in \cite{GMPS2}.) 
\begin{rem}
If $R$ is uniquely ergodic with an $R$-invariant probability measure $\mu$, then the map 
$T_{\mu}$ of $D_m(\mathcal{R})$ into $\mathbb{R}$ defined  by $T_{\mu}([f])=\int_{X}fd\mu$ 
is an order isomorphism of $D_m(\mathcal{R})$ onto $T_{\mu}(D_m(\mathcal{R}))\subseteq \mathbb{R}$ 
with $T_{\mu}([1]_{X})=1$. Moreover $T_{\mu}(D_m(\mathcal{R}))$ is an additive subgroup of 
$\mathbb{R}$ generated by $\{\mu (U):U\text{ is a clopen set in }X \}$ because for any 
$f\in C(X,\mathbb{Z})$, there exist clopen sets $U_1,..,U_n$ in $X$ and 
integers $m_1,..,m_n$ such that $f=\sum_{k=1}^nm_k 1_{U_k}$. 
\end{rem}
For a Cantor system $\mathcal{R}_{G, \varphi}$ and a natural number $n$, 
define an orbit equivalence relation $\mathcal{R}_{G, \varphi}^n$ 
on $X\times \{1,..,n \}$ by $\{((x,i),(y,j)) :(x,y)\in\mathcal{R}_{G, \varphi}, i,j\in \{1,..,n \} \}$. 
Note that $X\times \{1,..,n \}$ is a Cantor set and $\mathcal{R}_{G, \varphi}^n$ arises from 
an action $\tilde{\varphi}$ of $G\times \mathbb{Z}/n\mathbb{Z}$ on $X\times \mathbb{Z}/n\mathbb{Z}$. 
If $\mathcal{R}_{G, \varphi}$ is a Cantor minimal system, then 
$\mathcal{R}_{G, \varphi}^n$ is minimal. 
It can be easily checked that for any $\mathcal{R}_{G, \varphi}^n$-invariant 
probability measure 
$\nu$ on $X\times \{1,..,n\}$, there exists 
an $\mathcal{R}_{G, \varphi}$-invariant probability measure $\mu$ on $X$ such that  
$\nu =\mu\times \frac{1}{n}\delta$ where $\delta$ is the counting measure on $\{1,..,n\}$. 
\begin{pro}
Let $\mathcal{R}_{G, \varphi}$ be a Cantor system and $n$ a natural number. 
Then 
$$(D_m(\mathcal{R}_{G, \varphi}^n), D_m(\mathcal{R}_{G, \varphi}^n)^+, [1_{X\times \{1,..,n \}}]) 
\cong 
(D_m(\mathcal{R}_{G, \varphi}), D_m(\mathcal{R}_{G, \varphi})^{+}, n[1_{X}]).
$$ 
\end{pro}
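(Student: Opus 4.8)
The plan is to exhibit an explicit order isomorphism induced, at the level of integer-valued functions, by summing over the $n$ sheets. Identifying $C(X\times\{1,\dots,n\},\mathbb{Z})$ with $C(X,\mathbb{Z})^n$ by writing $f=(f_1,\dots,f_n)$ where $f_i=f(\cdot,i)\in C(X,\mathbb{Z})$, I would define the additive map $\Psi\colon C(X\times\{1,\dots,n\},\mathbb{Z})\to C(X,\mathbb{Z})$ by $\Psi(f)=\sum_{i=1}^n f_i$. Since $1_{X\times\{1,\dots,n\}}$ corresponds to $(1_X,\dots,1_X)$, we have $\Psi(1_{X\times\{1,\dots,n\}})=n\,1_X$, so once $\Psi$ is shown to descend to an order isomorphism $\psi$ of the quotient groups, the normalization $\psi([1_{X\times\{1,\dots,n\}}])=n[1_X]$ is automatic.

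The heart of the argument is the single integration identity
\[
\int_{X\times\{1,\dots,n\}} f\,d\nu \;=\; \frac{1}{n}\int_X \Psi(f)\,d\mu \qquad (\nu=\mu\times\tfrac{1}{n}\delta),
\]
which is immediate from the product structure of $\nu$. Combined with the correspondence between invariant measures recalled just before the statement --- every $\nu\in M(\mathcal{R}_{G,\varphi}^n)$ has the form $\mu\times\tfrac{1}{n}\delta$, and conversely each such product is $\mathcal{R}_{G,\varphi}^n$-invariant, so that $\mu\mapsto\mu\times\tfrac{1}{n}\delta$ is a bijection of $M(\mathcal{R}_{G,\varphi})$ onto $M(\mathcal{R}_{G,\varphi}^n)$ --- this identity shows that $f$ integrates to zero against every $\nu\in M(\mathcal{R}_{G,\varphi}^n)$ if and only if $\Psi(f)$ integrates to zero against every $\mu\in M(\mathcal{R}_{G,\varphi})$. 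This equivalence does double duty: the forward implication proves that $\Psi$ carries the defining kernel of $D_m(\mathcal{R}_{G,\varphi}^n)$ into that of $D_m(\mathcal{R}_{G,\varphi})$, so that $\psi$ is a well-defined group homomorphism, and the reverse implication proves that $\ker\psi=0$, i.e. that $\psi$ is injective.

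It then remains to check surjectivity and the order structure, both routine. For surjectivity, any $g\in C(X,\mathbb{Z})$ equals $\Psi(g,0,\dots,0)$, so $\Psi$ is already onto at the level of functions and hence $\psi$ is onto. For the order structure, if $f\ge 0$ then each $f_i\ge0$ and hence $\Psi(f)=\sum_i f_i\ge0$, giving $\psi(D_m(\mathcal{R}_{G,\varphi}^n)^+)\subseteq D_m(\mathcal{R}_{G,\varphi})^+$; conversely, a positive class in $D_m(\mathcal{R}_{G,\varphi})$ admits a representative $g\ge0$, and $(g,0,\dots,0)\ge0$ maps onto it, so $D_m(\mathcal{R}_{G,\varphi})^+\subseteq\psi(D_m(\mathcal{R}_{G,\varphi}^n)^+)$. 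Since $\psi$ is bijective this forces $\psi(D_m(\mathcal{R}_{G,\varphi}^n)^+)=D_m(\mathcal{R}_{G,\varphi})^+$, so $\psi$ is an order isomorphism, completing the proof.

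The only step carrying genuine content is the measure correspondence, and I expect its converse --- that $\mu\times\tfrac{1}{n}\delta$ is actually $\mathcal{R}_{G,\varphi}^n$-invariant --- to be the main (though mild) obstacle, since the ``if and only if'' for the kernels needs the full bijection $M(\mathcal{R}_{G,\varphi}^n)\cong M(\mathcal{R}_{G,\varphi})$, not merely the inclusion recalled before the statement. Here one uses freeness of $\varphi$ (and hence of the induced action $\tilde{\varphi}$ of $G\times\mathbb{Z}/n\mathbb{Z}$): any homeomorphism implementing $\mathcal{R}_{G,\varphi}^n$ agrees, on each piece of a suitable clopen partition, with an element of $\tilde{\varphi}$, so invariance of $\mu\times\tfrac{1}{n}\delta$ reduces to invariance of $\mu$ sheet by sheet. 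Everything downstream of this correspondence is bookkeeping.
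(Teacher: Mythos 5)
Your proof is correct and is essentially the paper's argument run in the opposite direction: the paper defines the isomorphism $D_m(\mathcal{R}_{G,\varphi})\to D_m(\mathcal{R}_{G,\varphi}^n)$ on generators by $[1_V]\mapsto[1_{V\times\{1\}}]$, and your sheet-summing map $[f]\mapsto[\sum_i f_i]$ is exactly its inverse, resting on the same two ingredients (the bijection $\mu\leftrightarrow\mu\times\tfrac{1}{n}\delta$ between invariant measures, which you rightly flag as the only step with content, and the identification of the $n$ sheets in the quotient). Nothing further is needed.
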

\begin{proof}
Let $\psi$ be an order homomorphism of $D_m(\mathcal{R}_{G, \varphi})$ to 
$D_m(\mathcal{R}_{G, \varphi}^n)$ such that $\psi ([1_{V}]) =[1_{V\times \{1\}}]$ 
for any clopen set $V$ in $X$. 
It is easy to see that $\psi$ is well-defined and injective. 
For any clopen set $U$ in $X\times\{1,..,n\}$, there exist 
clopen sets $V_1$,..,$V_n$ in $X$ such that $U=\cup_{k=1}^nV_k\times \{k \}$. 
Since $[1_{V_k\times\{k\}}]=[1_{V_k\times\{1\}}]$ in 
$D_m(\mathcal{R}_{G, \varphi}^n)$ for any $1\leq k\leq n$, we see that 
$\psi$ is surjective and $n\psi ([1_{X}]) =[1_{X\times\{1,..,n\}}]$. 
Therefore we obtain the conclusion. 
\end{proof}
Let $U$ be a clopen set in $X$. 
Define an orbit equivalence relation $\mathcal{R}_{G, \varphi}|_{U}$ 
by $\mathcal{R}_{G, \varphi}\cap U\times U$. 
If $\mathcal{R}_{\mathbb{Z}, \varphi}$ is a Cantor minimal $\mathbb{Z}$-system, 
then $\mathcal{R}_{\mathbb{Z}, \varphi}|_{U}$ is equal to the induced system 
$\mathcal{R}_{\mathbb{Z}, \varphi_{U}}$ in \cite{GPS1}. 
It is easy to see that if $\mathcal{R}_{G, \varphi}$ is minimal, then 
$\mathcal{R}_{G, \varphi}|_{U}$ is minimal. 
\begin{pro}\label{pro:inclusion}
Let $\mathcal{R}_{G, \varphi}$ be a Cantor minimal system and $U$ a clopen set in $X$. Then 
there exists a bijective map $\pi$ of $M(\mathcal{R}_{G, \varphi})$ onto  
$M(\mathcal{R}_{G, \varphi}|_{U})$ such that $\pi (\mu)=\mu |_{U}$.  Moreover we have 
$$(D_m(\mathcal{R}_{G, \varphi}|_{U}), D_m(\mathcal{R}_{G, \varphi}|_{U})^+, 
[1_{U}]) 
\cong 
(D_m(\mathcal{R}_{G, \varphi}), D_m(\mathcal{R}_{G, \varphi})^{+}, [1_{U}]).
$$ 
\end{pro}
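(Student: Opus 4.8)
The plan is to build a bijection $\pi\colon M(\mathcal{R}_{G,\varphi})\to M(\mathcal{R}_{G,\varphi}|_U)$ first, and then to obtain the order isomorphism of triples from the ``extension by zero'' map on functions. Throughout I will use that $M(\mathcal{R}_{G,\varphi})$ coincides with the set of $\varphi$-invariant probability measures, and that any such $\mu$ has full support (its support is a nonempty closed $\varphi$-invariant set, hence all of $X$ by minimality), so $\mu(U)>0$ for every nonempty clopen $U$. I would define $\pi(\mu)=\mu|_U/\mu(U)$, the normalized restriction, which is the content of the stated formula $\pi(\mu)=\mu|_U$. That $\pi(\mu)\in M(\mathcal{R}_{G,\varphi}|_U)$ is the easy direction: since $U$ is clopen, any $h\in\mathrm{Homeo}(U)$ with graph in $\mathcal{R}_{G,\varphi}|_U$ extends by the identity on $X\setminus U$ to an $H\in\mathrm{Homeo}(X)$ with graph in $\mathcal{R}_{G,\varphi}$, and $H^{-1}(A)=h^{-1}(A)$ for $A\subseteq U$, so $\mu$-invariance of $H$ forces $h$-invariance of $\mu|_U$.

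For bijectivity I would first fix a finite cover: by minimality and compactness there are $g_1=e,g_2,\dots,g_m\in G$ with $X=\bigcup_i\varphi_{g_i}(U)$, which I disjointify into a clopen partition $X=\bigsqcup_i W_i$ with $W_1=U$ and $W_i\subseteq\varphi_{g_i}(U)$. Injectivity is then routine: for clopen $A$, $\varphi$-invariance gives $\mu(A)=\sum_i\mu\bigl(\varphi_{g_i}^{-1}(A\cap W_i)\bigr)$ with each $\varphi_{g_i}^{-1}(A\cap W_i)\subseteq U$, so $\mu$ is determined on all of $X$ by $\mu|_U$, and the scale is pinned down by $\mu(X)=1$; hence $\mu$ is recovered from $\pi(\mu)$. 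For surjectivity, given $\nu\in M(\mathcal{R}_{G,\varphi}|_U)$ I would set $\mu_0(A)=\sum_i\nu\bigl(\varphi_{g_i}^{-1}(A\cap W_i)\bigr)$ on clopen sets, extend to a finite Borel measure, and normalize. Because $W_1=U$ and $g_1=e$, one gets $\mu_0|_U=\nu$ immediately, so $\pi(\mu_0/\mu_0(X))=\nu$; the real work is to verify that $\mu_0$ is $\varphi_g$-invariant for every $g$.

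The hard part will be precisely this $\varphi_g$-invariance of $\mu_0$, which reduces, after decomposing $A$ and $\varphi_g(A)$ along the partition, to the following key lemma: \emph{every} $\nu\in M(\mathcal{R}_{G,\varphi}|_U)$ satisfies $\nu(D)=\nu(\varphi_k(D))$ whenever $D$ and $\varphi_k(D)$ are clopen subsets of $U$. This is the point where one must pass from invariance under full homeomorphisms of $U$ to invariance under the partial translations generating $\mathcal{R}_{G,\varphi}|_U$, and a naive completion of the partial homeomorphism to a global one is obstructed by measures. I would circumvent this using freeness: for $k\neq e$ the map $\varphi_k$ is fixed-point-free, so each point of $D$ has a clopen neighborhood disjoint from its $\varphi_k$-image, and by compactness $D=\bigsqcup_l D_l$ with $D_l\cap\varphi_k(D_l)=\emptyset$. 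For each $l$ the ``swap'' map equal to $\varphi_k$ on $D_l$, to $\varphi_k^{-1}$ on $\varphi_k(D_l)$, and to the identity elsewhere is a genuine full homeomorphism of $U$ with graph in $\mathcal{R}_{G,\varphi}|_U$, so $\nu(D_l)=\nu(\varphi_k(D_l))$; summing over $l$ gives the lemma. I expect this lemma to be the main obstacle of the whole proof.

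Finally I would deduce the isomorphism of triples via $\iota\colon D_m(\mathcal{R}_{G,\varphi}|_U)\to D_m(\mathcal{R}_{G,\varphi})$, $\iota([f])=[\tilde f]$, where $\tilde f$ extends $f$ by $0$. Since $\int_X\tilde f\,d\mu=\mu(U)\int_U f\,d\pi(\mu)$, well-definedness and injectivity of $\iota$ follow from the bijectivity of $\pi$ just established, and clearly $\iota([1_U])=[1_U]$. Surjectivity comes from the covering: for $g\in C(X,\mathbb{Z})$ the function $f_g=\sum_i (g\circ\varphi_{g_i})\,1_{\varphi_{g_i}^{-1}(W_i)}\in C(U,\mathbb{Z})$ satisfies $\int_U f_g\,d\mu=\int_X g\,d\mu$ for all $\mu$, whence $[g]=\iota([f_g])$. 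The order is preserved in both directions: $f\geq 0$ gives $\tilde f\geq 0$, and conversely if $[\tilde f]=[g]$ with $g\geq 0$ then $f_g\geq 0$ and injectivity of $\iota$ forces $[f]=[f_g]\in D_m(\mathcal{R}_{G,\varphi}|_U)^+$. Thus $\iota$ is an order isomorphism carrying $[1_U]$ to $[1_U]$, which is the asserted isomorphism of triples.
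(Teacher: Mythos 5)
Your proof is correct and follows essentially the same route as the paper: a finite clopen cover of $X$ by translates of $U$, the reconstruction formula $\mu(A)=\sum_i\mu(\varphi_{g_i}^{-1}(A\cap W_i))$ for injectivity and surjectivity of $\pi$, invariance of the reconstructed measure via partial translations realized as homeomorphisms of $U$ with graph in $\mathcal{R}_{G,\varphi}|_U$, and extension by zero for the isomorphism of triples. Your additional step of splitting $D$ into clopen pieces $D_l$ with $D_l\cap\varphi_k(D_l)=\emptyset$ (using freeness) so that the swap maps are genuine homeomorphisms is in fact slightly more careful than the paper's maps $h_{k,j,g}$, which as literally written need not be bijections; apart from this refinement (and your harmless normalization of $\mu|_U$), the two arguments coincide.
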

\begin{proof}
It is easy to see that $\pi$ is well-defined. 
Since $X$ is compact and $\mathcal{R}_{G, \varphi}$ is minimal, 
there exist $g_1,..,g_n\in G$ such that $X=\cup_{k=1}^n\varphi_{g_k}(U)$. 
Put $U_{k}:=\varphi_{g_k}(U)\backslash (\cup_{i=1}^{k-1}\varphi_{g_i}(U))$ 
for $1\leq k\leq n$. 
Then we have 
$\mu (V)=\sum_{k=1}^n\mu (\varphi_{g_k^{-1}}(V\cap U_k))$ for any 
$\mu\in M(\mathcal{R}_{G, \varphi})$ and any clopen set $V$ in $X$. 
Since $\varphi_{g_k^{-1}}(V\cap U_k)$ is a clopen set in $U$, $\mu$ is determined by 
$\mu |_{U}$. Therefore $\pi$ is injective. 
For any $\nu\in M(\mathcal{R}_{G, \varphi}|_U)$, define a measure $\mu$ on $X$ by 
$\mu (V)=\sum_{k=1}^n\nu (\varphi_{g_k^{-1}}(V\cap U_k))$ 
for any clopen set $V$ in $X$. 
We shall show that $\mu$ is $\mathcal{R}_{G, \varphi}$-invariant. 
Note that for $1\leq k,j\leq n$, $g\in G$ and a clopen set $V$ in $X$,  
there exist homeomorphisms $h_{k,j,g}$ of $U$ such that 
$h_{k,j,g}(x)=x$ if $x\notin \varphi_{g_k^{-1}}(U_k\cap\varphi_{g}(U_j\cap V))$ 
and $h_{k,j,g}(x)=\varphi_{g_j}^{-1}\circ\varphi_{g}^{-1}\circ\varphi_{g_k}(x)$ if 
$x\in \varphi_{g_k^{-1}}(U_k\cap\varphi_{g}(U_j\cap V))$. 
It is clear that $(x, h_{k,j,g}(x))\in \mathcal{R}_{G, \varphi}|_U$ for any $x\in U$. 
For any $g\in G$ and any clopen set $V$ in $X$, we have 
\begin{align*}
\mu (\varphi_{g}(V)) 
& =\sum_{k=1}^n \nu (\varphi_{g_k^{-1}}(U_k \cap \varphi_{g} (V))) \\ 
& =\sum_{k=1}^n \nu (\varphi_{g_k^{-1}}(U_k \cap \varphi_{g} (\cup_{j=1}^n U_j\cap V))) \\
& =\sum_{k=1}^n \sum_{j=1}^n \nu (\varphi_{g_k^{-1}}(U_k\cap \varphi_{g} (U_j\cap V))) \\
& = \sum_{k=1}^n \sum_{j=1}^n \nu (h_{k,j,g} (\varphi_{g_k^{-1}}(U_k\cap \varphi_{g} (U_j\cap V)))) \\
& =\sum_{k=1}^n \sum_{j=1}^n \nu (\varphi_{g_j^{-1}}( \varphi_{g^{-1}}(U_k)\cap U_{j}\cap V)) \\
&=\sum_{j=1}^n\nu (\varphi_{g_j^{-1}}(\varphi_{g^{-1}} (\cup_{k=1}^n U_k)\cap U_j\cap V)) \\
& = \mu (V).
\end{align*}
Therefore we see that $\pi$ is surjective. 

Define a homomorphism $\psi$ of $D_m(\mathcal{R}_{G, \varphi})|_{U}$ to 
$D_m(\mathcal{R}_{G, \varphi})$ by $\psi ([1_{W}]) = [1_{W}]$ 
for any clopen set $W$ in $U$. 
Then it can be easily checked that $\psi$ is an order isomorphism by a similar 
argument. 
\end{proof}
\section{Analogous result of Brown's Lemma}
In this section we shall show the analogous result of Lemma 2.5 in \cite{B}. 
Let $\mathcal{R}_{G, \varphi}$ be a Cantor minimal system. 
Define an orbit equivalence relation $\mathcal{R}_{G, \varphi}^\infty$ 
on $X\times \mathbb{N}$ by $\{((x,i),(y,j)) :(x,y)\in\mathcal{R}_{G, \varphi}, i,j\in \mathbb{N} \}$. 
Note that $X\times \mathbb{N}$ is a locally compact separable totally disconnected metric 
space without isolated points and $\mathcal{R}_{G, \varphi}^\infty$ 
arises from an action $\tilde{\varphi}$ of $G\times \mathbb{Z}$ on $X\times \mathbb{Z}$. 
It can be easily checked that if $\mathcal{R}_{G, \varphi}$ is a uniquely ergodic 
Cantor minimal system, then $\mathcal{R}_{G, \varphi}^\infty$ is minimal and 
has unique (up to scalar multiple) $\mathcal{R}_{G, \varphi}^\infty$-invariant measure. 
\begin{lem}\label{lem:lemma of key lemma}
Let $\mathcal{R}_{G, \varphi}$ be a Cantor minimal system on $X$ and 
$U$ a clopen set in $X$. Then there exists 
an injective continuous open map $F$ of $X\times \mathbb{N}$ to 
$U\times \mathbb{N}$ such that $(y_1, y_2)\in\mathcal{R}_{G, \varphi}^{\infty}$ if and only if 
$(F (y_1),F (y_2))\in\mathcal{R}_{G, \varphi}^{\infty}$. 
\end{lem}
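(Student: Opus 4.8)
The plan is to construct $F$ explicitly by ``folding'' one copy of $X$ into finitely many copies of $U$ and then spreading the countably many copies of $X$ over disjoint blocks of copies of $U$, so that injectivity survives the folding. First I would reuse the cover from the proof of Proposition \ref{pro:inclusion}: by minimality and compactness of $X$ there are $g_1,\dots,g_n\in G$ with $X=\cup_{k=1}^n\varphi_{g_k}(U)$, and setting $U_k:=\varphi_{g_k}(U)\setminus(\cup_{i=1}^{k-1}\varphi_{g_i}(U))$ gives a partition of $X$ into clopen sets with each $\varphi_{g_k}^{-1}(U_k)$ a clopen subset of $U$. The restriction of the homeomorphism $\varphi_{g_k}^{-1}$ to $U_k$ then folds $X$ into the first $n$ copies of $U$ via $f(x)=(\varphi_{g_k}^{-1}(x),k)$ for $x\in U_k$.

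Next I would stabilize. Using the bijection $m=(i-1)n+k$ between $\mathbb{N}$ and $\mathbb{N}\times\{1,\dots,n\}$, define
$$
F(x,i)=(\varphi_{g_k}^{-1}(x),\,(i-1)n+k)\qquad\text{for }x\in U_k.
$$
Since the $U_k$ are clopen, the dependence on $k$ is locally constant, so $F$ is continuous; and because distinct copies $X\times\{i\}$ are sent into disjoint blocks of copies of $U$ while $f$ is injective on each copy (distinct $U_k$ land in distinct copies, and $\varphi_{g_k}^{-1}$ is injective on $U_k$), the map $F$ is injective.

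For openness I would observe that over the copy indexed by $(i-1)n+k$ the image of $F$ is the clopen set $\varphi_{g_k}^{-1}(U_k)\subseteq U$; hence the total image of $F$ is clopen in $U\times\mathbb{N}$ and $F$ is a homeomorphism onto it, so $F$ carries open sets to sets that are open in $U\times\mathbb{N}$. For the orbit condition I would use that $\mathcal{R}_{G,\varphi}$ is the ``same $G$-orbit'' relation and that each point $\varphi_{g_k}^{-1}(x)$ lies in the $G$-orbit of $x$; consequently $(\varphi_{g_k}^{-1}(x),\varphi_{g_l}^{-1}(y))\in\mathcal{R}_{G,\varphi}$ if and only if $(x,y)\in\mathcal{R}_{G,\varphi}$, and since the $\mathbb{N}$-coordinates play no role in $\mathcal{R}_{G,\varphi}^{\infty}$, this yields $(F(y_1),F(y_2))\in\mathcal{R}_{G,\varphi}^{\infty}\iff(y_1,y_2)\in\mathcal{R}_{G,\varphi}^{\infty}$.

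I do not expect a deep obstacle here: the content is entirely in the bookkeeping that makes $F$ \emph{simultaneously} injective and open. The delicate point is that folding $X$ into only $n$ copies of $U$ would destroy injectivity if all copies of $X$ shared the same target block, so the essential trick is to exploit the extra room in $X\times\mathbb{N}$ by assigning each copy its own block of $n$ target copies; openness is then free once one notes the image is clopen. Everything else reduces to the elementary properties of the clopen partition $\{U_k\}$ already used in Proposition \ref{pro:inclusion}.
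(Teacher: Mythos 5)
Your construction is exactly the paper's: the same clopen partition $U_k=\varphi_{g_k}(U)\setminus\bigcup_{i<k}\varphi_{g_i}(U)$ and the same map $F((x,i))=(\varphi_{g_k}^{-1}(x),\Psi(k,i))$, with your $(i-1)n+k$ being one explicit choice of the bijection $\Psi:\{1,\dots,n\}\times\mathbb{N}\to\mathbb{N}$ that the paper leaves abstract. The verifications of injectivity, continuity, openness and preservation of the relation are correct and simply fill in what the paper declares ``easily checked.''
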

\begin{proof}
Since $X$ is compact and $\mathcal{R}_{G, \varphi}$ is minimal, 
there exist $g_1,..,g_n\in G$ such that $X=\cup_{k=1}^n\varphi_{g_k}(U)$. 
Put $U_{k}:=\varphi_{g_k}(U)\backslash (\cup_{i=1}^{k-1}\varphi_{g_i}(U))$ 
for $1\leq k\leq n$. 
Let $\Psi$ be a bijective map of $\{1,..,n\}\times \mathbb{N}$ onto 
$\mathbb{N}$. 
Define a map $F$ of $X\times\mathbb{N}$ to $U\times\mathbb{N}$ by 
$F ((x,i)) =(\varphi_{g_k^{-1}} (x),\Psi (k,i))$ if $x\in U_k$ for any $1\leq k\leq n$. 
Then it can be easily checked that $F$ is an injective continuous open map such that 
$(y_1, y_2)\in\mathcal{R}_{G, \varphi}^{\infty}$ if and only if 
$(F (y_1),F (y_2))\in\mathcal{R}_{G, \varphi}^{\infty}$. 
\end{proof}
The following lemma is the analogous result of Lemma 2.5 in \cite{B}. 
\begin{lem}\label{lem:Brown}
Let $\mathcal{R}_{G, \varphi}$ be a Cantor minimal system on $X$ and 
$U$ a clopen set in $X$. Then there exists a homeomorphism $\Phi$ of $U\times \mathbb{N}$ 
onto $X\times \mathbb{N}$ such that 
$\Phi\times \Phi((\mathcal{R}_{G, \varphi}|_{U})^\infty) =\mathcal{R}_{G, \varphi}^\infty$ and 
$\Phi(U\times \{1 \})= U\times \{1 \}\subseteq X\times \{1\}$. 
\end{lem}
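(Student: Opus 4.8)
The plan is to deduce the statement from Lemma~\ref{lem:lemma of key lemma} by upgrading the embedding produced there to a homeomorphism through a Schr\"oder--Bernstein (Eilenberg swindle) argument, keeping careful track of the extra copies in the $\mathbb{N}$-direction so that the resulting bijection is continuous.

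First I would record a general principle that trivializes the orbit-equivalence bookkeeping: any bijection $h$ between clopen subsets of $X\times\mathbb{N}$ that moves the $X$-coordinate within its own orbit, i.e.\ the first coordinate of $h(x,i)$ lies in $\mathcal{R}_{G,\varphi}[x]$ for every $(x,i)$, automatically satisfies $h\times h((\mathcal{R}_{G,\varphi}|_{U})^\infty)=\mathcal{R}_{G,\varphi}^\infty$ on the relevant sets, since $\mathcal{R}_{G,\varphi}$ is an equivalence relation and the defining relations ignore the $\mathbb{N}$-coordinate, so relatedness is both preserved and reflected. The problem thus reduces to producing a \emph{homeomorphism} $\Phi$ of $U\times\mathbb{N}$ onto $X\times\mathbb{N}$ that moves $X$-coordinates within orbits and satisfies $\Phi(U\times\{1\})=U\times\{1\}$. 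Here $F$ of Lemma~\ref{lem:lemma of key lemma} is an orbit-moving homeomorphism of $X\times\mathbb{N}$ onto a subset $V:=F(X\times\mathbb{N})$ of $U\times\mathbb{N}$ which is clopen (each of its $\mathbb{N}$-slices is clopen by the explicit formula), and the shifted inclusion $\iota(x,i):=(x,i+1)$ is an orbit-moving homeomorphism of $U\times\mathbb{N}$ onto the clopen set $U\times\{2,3,\dots\}$.

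Next I would run the Schr\"oder--Bernstein construction with $f=F\colon X\times\mathbb{N}\to U\times\mathbb{N}$ and $g=\iota\colon U\times\mathbb{N}\to X\times\mathbb{N}$. Setting $C_0:=(X\times\mathbb{N})\setminus g(U\times\mathbb{N})$ and $C_{m+1}:=g(f(C_m))$, the map equal to $f$ on $C_*:=\bigcup_m C_m$ and to $g^{-1}$ elsewhere is a bijection of $X\times\mathbb{N}$ onto $U\times\mathbb{N}$, whose inverse will be $\Phi$. Each $C_m$ is clopen, but $C_*$ is a priori only open, and this is the main obstacle: I must force $C_*$ to be clopen so that the pieced-together bijection is a homeomorphism.

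To overcome this I would exploit the freedom in the bijection $\Psi$ defining $F$. Choosing $\Psi(k,i)=(i-1)n+k$ makes the $\mathbb{N}$-coordinate of $F(x,i)$ at least $i$, so that $g\circ f$ raises the $\mathbb{N}$-level by at least one; hence $C_m$ is supported on levels $\geq 1+m$, and at each fixed level only finitely many $C_m$ contribute, so their union there is a finite union of clopen sets and $C_*$ has clopen trace on every level, making it clopen in $X\times\mathbb{N}$. This is exactly where passage to the stabilization $\times\mathbb{N}$ is essential. Finally, to secure the corner condition $\Phi(U\times\{1\})=U\times\{1\}$, I would apply this level-controlled swindle not on the whole space but on the complement of $U\times\{1\}$, declaring $\Phi$ to be the identity on $U\times\{1\}\hookrightarrow X\times\{1\}$ and matching $(X\setminus U)\times\{1\}\sqcup X\times\{2,3,\dots\}$ with $U\times\{2,3,\dots\}$ by the same argument. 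The crux, as indicated, is the clopenness of the flip set, resolved precisely by the level-raising choice of $\Psi$ together with the level-one shift.
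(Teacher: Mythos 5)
Your argument is correct, and like the paper's proof it is at bottom a Cantor--Bernstein argument built on Lemma~\ref{lem:lemma of key lemma}, but the two proofs handle the topological difficulty in genuinely different ways. The paper works on $U\times\mathbb{N}\times\mathbb{N}\cong U\times\mathbb{N}$: the first level of the extra $\mathbb{N}$-factor is mapped identically (which yields the corner condition $\Phi(U\times\{1\})=U\times\{1\})$ for free), and each subsequent level donates a single clopen piece $E_m$, sent by $F^{-1}$ down to fill the deficit left at the previous level; since every level of domain and target is partitioned into exactly two clopen pieces on each of which $\Phi$ is a homeomorphism, continuity is automatic and no special choice of the bijection $\Psi$ is needed. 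You instead run the classical flip-set construction directly between $X\times\mathbb{N}$ and $U\times\mathbb{N}$ and correctly isolate the only obstruction, namely clopenness of $C_*=\bigcup_m C_m$; your normalization $\Psi(k,i)=(i-1)n+k$, which makes $F$ level-nondecreasing, combined with the level-raising shift $\iota$, forces $C_m$ onto levels $\geq 1+m$, so only finitely many $C_m$ meet each level and $C_*$ is indeed clopen. What your route buys is an explicit localization of the difficulty in the choice of $\Psi$; what the paper's buys is indifference to $\Psi$ at the cost of the extra $\mathbb{N}$-factor. The one place you should make explicit is the final corner step: when matching $(X\setminus U)\times\{1\}\sqcup X\times\{2,3,\dots\}$ with $U\times\{2,3,\dots\}$, the tempting choice of the bare inclusion $U\times\{2,3,\dots\}\hookrightarrow X\times\{2,3,\dots\}$ as the second injection does not raise levels, so the recursion $C_{m+1}=g(f(C_m))$ would no longer push $C_m$ up and the finiteness-per-level argument would fail; you must again use a level-raising shift for $g$ (or make $f$ strictly level-increasing), after which the same argument does go through.
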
 
\begin{proof}
We shall construct a homeomorphism $\Phi$ of 
$U\times \mathbb{N}\times\mathbb{N}$ onto $X\times \mathbb{N}\times\mathbb{N}$. 
First, for $(x,i,1)\in U\times\mathbb{N}\times\mathbb{N}$, 
define $\Phi ((x,i,1))=(x,i,1)\in X\times \mathbb{N}\times\mathbb{N}$. 
Let $F$ be an injective continuous open map of $X\times \mathbb{N}$ to $U\times \mathbb{N}$ 
in Lemma \ref{lem:lemma of key lemma}. 
Put $E_1=F((X\times \mathbb{N})\backslash (U\times \mathbb{N}))$. 
Define $\Phi ((x,i,2))= (F^{-1}((x,i)), 1)$ if $(x,i)\in E_1$ and 
$\Phi ((x,i,2))= (x,i,2)$ if $(x,i)\in (U\times\mathbb{N})\backslash E_1$. 
Put $E_2:=F((X\times \mathbb{N})\backslash ((U\times\mathbb{N})\backslash E_1))$. 
In a similar way, define $\Phi (((x,i),3)) =(F^{-1}((x,i)),2)$ if 
$(x,i)\in E_2$ and $\Phi ((x,i),3)= (x,i,3)$ if $(x,i)\in 
(U\times\mathbb{N})\backslash E_2$. 
We construct inductively $\Phi$ as follows: 
Let $E_n:=F((X\times \mathbb{N})\backslash((U\times\mathbb{N})\backslash E_{n-1}))$, and define 
$\Phi ((x,i,n+1))= (F^{-1}((x,i)),n)$ if $(x,i)\in E_n$ and 
$\Phi ((x,i,n+1))= (x,i,n+1)$ if $(x,i)\in (U\times \mathbb{N})\backslash E_n$. 
By a Cantor-Bernstein type argument, we obtain the conclusion. 
\end{proof}
\section{Fundamental group}
Let $\mathcal{R}_{G, \varphi}$ be a uniquely ergodic Cantor minimal system 
with an $\mathcal{R}_{G, \varphi}$-invariant probability measure $\mu$. 
Put 
$$
\mathcal{F}(\mathcal{R}_{G, \varphi}):=
\{\mu\times \delta (U):U\text{ is a clopen set in }X\times [n]\text{ such that } 
\mathcal{R}_{G, \varphi}\underset{OE}{\sim} \mathcal{R}_{G, \varphi}^n|_{U}\}
$$
where $\delta$ is the counting measure on $[n]=\{1,..,n\}$. 
\begin{thm}\label{thm:fundamental group}
Let $\mathcal{R}_{G, \varphi}$ be a uniquely ergodic Cantor minimal system 
with an $\mathcal{R}_{G, \varphi}$-invariant probability measure $\mu$. Then 
$\mathcal{F}(\mathcal{R}_{G, \varphi})$ is a countable multiplicative subgroup of 
$\mathbb{R}_{+}^\times$. 
\end{thm}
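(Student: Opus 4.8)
The plan is to prove the statement by direct verification: first that $\mathcal{F}(\mathcal{R}_{G,\varphi})$ (abbreviate $\mathcal{R}:=\mathcal{R}_{G,\varphi}$, with invariant probability measure $\mu$) is a countable subset of $\mathbb{R}_+^\times$, and then that it contains $1$ and is closed under products and inverses. Membership in $\mathbb{R}_+^\times$ is clear, since any clopen $U$ witnessing membership is nonempty and $\mathcal{R}$ is minimal, so $\mu\times\delta(U)>0$. Countability holds because the clopen subsets of the Cantor set $X$ form a countable Boolean algebra; hence $\{\mu(V):V\text{ clopen in }X\}$ is countable, and writing $U=\bigsqcup_{k=1}^n V_k\times\{k\}$ gives $\mu\times\delta(U)=\sum_{k=1}^n\mu(V_k)$, a finite sum of such numbers, so only countably many values occur as $n$ ranges over $\mathbb{N}$. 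The identity is witnessed by $n=1$, $U=X\times\{1\}$, for which $\mathcal{R}^1|_U=\mathcal{R}$ and $\mu\times\delta(U)=1$.

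For closure under products, let $\alpha=\mu\times\delta(U)$ and $\beta=\mu\times\delta(V)$ with $U\subseteq X\times[n]$, $V\subseteq X\times[m]$ and $\mathcal{R}^n|_U\underset{OE}{\sim}\mathcal{R}\underset{OE}{\sim}\mathcal{R}^m|_V$; fix a homeomorphism $F\colon X\to U$ with $F\times F(\mathcal{R})=\mathcal{R}^n|_U$. Identifying $X\times[nm]=X\times[n]\times[m]$, one checks $\mathcal{R}^{nm}|_{U\times[m]}=(\mathcal{R}^n|_U)^m$, so $F\times\mathrm{id}_{[m]}\colon X\times[m]\to U\times[m]$ is an orbit equivalence of $\mathcal{R}^m$ onto $\mathcal{R}^{nm}|_{U\times[m]}$. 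Put $V':=(F\times\mathrm{id}_{[m]})(V)\subseteq X\times[nm]$; then $\mathcal{R}^{nm}|_{V'}\underset{OE}{\sim}\mathcal{R}^m|_V\underset{OE}{\sim}\mathcal{R}$. Since $\mathcal{R}^n|_U$ is uniquely ergodic, $F$ sends $\mu$ to the unique invariant probability measure $\tfrac1\alpha(\mu\times\delta)|_U$, whence $F\times\mathrm{id}_{[m]}$ multiplies $\mu\times\delta$ by the factor $\alpha$; therefore $\mu\times\delta(V')=\alpha\beta$, giving $\alpha\beta\in\mathcal{F}(\mathcal{R})$.

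Closure under inverses is where Lemma~\ref{lem:Brown} is essential, and it is the step I expect to be the main obstacle. Given $\alpha=\mu\times\delta(U)\in\mathcal{F}(\mathcal{R})$ with $U\subseteq X\times[n]$ and $F\colon X\to U$ as above, apply the Brown-type Lemma~\ref{lem:Brown} to the Cantor minimal system $\mathcal{R}^n$ on $X\times[n]$ and the clopen set $U$: this yields a homeomorphism $\Phi\colon U\times\mathbb{N}\to(X\times[n])\times\mathbb{N}$ with $\Phi\times\Phi((\mathcal{R}^n|_U)^\infty)=(\mathcal{R}^n)^\infty$ and $\Phi(U\times\{1\})=U\times\{1\}$. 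Because both amplified systems are uniquely ergodic up to scalar, $\Phi$ carries the invariant probability measure $\nu\times\delta$ of $(\mathcal{R}^n|_U)^\infty$, where $\nu=\tfrac1\alpha(\mu\times\delta)|_U$, to a scalar multiple $c$ of the invariant measure of $(\mathcal{R}^n)^\infty$; evaluating on the anchored set $U\times\{1\}$ and using that the invariant probability measure of $\mathcal{R}^n$ is $\mu\times\tfrac1n\delta$ pins down $c=n/\alpha$. Now pull back a single copy of $\mathcal{R}$: set $B:=X\times\{(1,1)\}\subseteq(X\times[n])\times\mathbb{N}$, so that $(\mathcal{R}^n)^\infty|_B=\mathcal{R}$, and let $V:=\Phi^{-1}(B)\subseteq U\times\mathbb{N}$. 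Then $(\mathcal{R}^n|_U)^\infty|_V\underset{OE}{\sim}\mathcal{R}$, and the scaling gives $(\nu\times\delta)(V)=c\cdot\tfrac1n=1/\alpha$. Since $B$ is compact and $\Phi$ is a homeomorphism, $V$ is a compact clopen set, hence contained in some finite level $U\times[m]$; transporting $V$ back through $F\times\mathrm{id}_{[m]}$ produces a clopen $V_0\subseteq X\times[m]$ with $\mathcal{R}^m|_{V_0}\underset{OE}{\sim}\mathcal{R}$ and $\mu\times\delta(V_0)=(\nu\times\delta)(V)=1/\alpha$, so $\alpha^{-1}\in\mathcal{F}(\mathcal{R})$.

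The delicate points I anticipate are the bookkeeping of the normalizations of the various invariant measures (the factor $\tfrac1n$ for $\mathcal{R}^n$ and the factor $\tfrac1\alpha$ from the reduction to $U$), on which the identities $\mu\times\delta(V')=\alpha\beta$ and $\mu\times\delta(V_0)=1/\alpha$ rest; unique ergodicity is used precisely to force these pushforward measures to be the expected scalar multiples, and the anchoring condition $\Phi(U\times\{1\})=U\times\{1\}$ is what makes the scalar $c$ computable. The reason a softer argument fails — and why an explicit set must be produced via $\Phi$ rather than merely matching the measure — is exactly the obstruction noted in the introduction: equality of $\mu\times\delta$ is not known to imply orbit equivalence of the reductions, so the inverse has to be exhibited as a genuine clopen set pulled back along the Brown-type homeomorphism.
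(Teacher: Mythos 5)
Your proof is correct, but it is organized quite differently from the paper's. The paper does not verify the group axioms directly: it introduces the auxiliary set $\mathfrak{S}(\mathcal{R}_{G,\varphi})$ of scaling constants $\lambda$ of automorphisms $F\in\mathrm{Aut}(\mathcal{R}_{G,\varphi}^{\infty})$ satisfying $\mu\times\delta\circ F=\lambda\,\mu\times\delta$, observes that $\mathfrak{S}$ is trivially a multiplicative group (compose and invert automorphisms), and then proves $\mathcal{F}(\mathcal{R}_{G,\varphi})=\mathfrak{S}(\mathcal{R}_{G,\varphi})$ in two steps: an automorphism of $\mathcal{R}^{\infty}$ sends $X\times\{1\}$ to a compact clopen set sitting inside some $X\times[n]$, giving $\mathfrak{S}\subseteq\mathcal{F}$; conversely an orbit equivalence $h\colon X\to U$ is promoted, via Lemma~\ref{lem:Brown}, to an automorphism of $\mathcal{R}^{\infty}$ with the right scaling, giving $\mathcal{F}\subseteq\mathfrak{S}$. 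You instead check closure under products by an elementary finite amplification argument (no Brown lemma needed there) and closure under inverses by pulling a copy of $X$ back through the Brown-type homeomorphism $\Phi$; your measure bookkeeping ($c=n/\alpha$, $(\nu\times\delta)(V)=1/\alpha$, and the transport through $F\times\mathrm{id}_{[m]}$) is all correct, and you correctly identify unique ergodicity as the mechanism pinning down the pushforward measures. What the paper's route buys is the identification $\mathcal{F}=\mathfrak{S}$ itself, which is reused later (the proof of Proposition~\ref{pro:subseteq} explicitly invokes the automorphisms $F_1,F_2$ of $X\times\mathbb{N}$ produced in the proof of Theorem~\ref{thm:fundamental group}); what your route buys is a more self-contained, hands-on argument in which each group axiom is witnessed by an explicit clopen set, and in which the product case needs only finite amplifications. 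Your closing remark about why one must exhibit an explicit set rather than merely match measures is exactly the right diagnosis.
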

\begin{proof}
Put 
$$
\mathfrak{S}(\mathcal{R}_{G, \varphi})
=\{\lambda : \mu\times \delta\circ F=\lambda \mu\times \delta ,F\in
\mathrm{Aut}(\mathcal{R}_{G, \varphi}^\infty) \}. 
$$
It is easy to see that $\mathfrak{S}(\mathcal{R}_{G, \varphi})$ is a multiplicative subgroup 
of $\mathbb{R}_{+}^\times$. We shall show that 
$\mathcal{F}(\mathcal{R}_{G, \varphi})=\mathfrak{S}(\mathcal{R}_{G, \varphi})$. 

Let $\lambda\in \mathfrak{S}(\mathcal{R}_{G, \varphi})$, then there exists 
a homeomorphism $F$ of $X\times\mathbb{N}$ such that 
$\mu\times\delta \circ F = \lambda \mu\times\delta$ and 
$F\times F (\mathcal{R}_{G, \varphi}^\infty)=\mathcal{R}_{G, \varphi}^\infty$. 
We see that $\mathcal{R}_{G, \varphi}|_{X\times\{1\}}$ is orbit equivalent to 
$\mathcal{R}_{G, \varphi}^\infty|_{F(X\times\{1\})}$. 
Note that $\mathcal{R}_{G, \varphi}^\infty|_{X\times\{1\}}$ is orbit equivalent to 
$\mathcal{R}_{G, \varphi}$. 
Since $F(X\times\{1\})$ is compact and $F(X\times\{1\})\subseteq\cup_{i=1}^\infty
\{(x,k)\in X\times\mathbb{N}:k=i \}$, there exist a natural number $n$ and 
a clopen set $U$ in $X\times \{1,..,n\}$ such that $F(X\times\{1\})=U$. 
Therefore $\lambda\in \mathcal{F}(\mathcal{R}_{G, \varphi})$. 

Conversely, let $\lambda\in \mathcal{F}(\mathcal{R}_{G, \varphi})$. 
There exist a natural number $n$, a clopen set $U$ in $X\times \{1,..,n\}$ 
and a homeomorphism $h$ of $X$ onto $U\times\{1,..,n\}$ such that 
$h\times h (\mathcal{R}_{G, \varphi})= \mathcal{R}_{G, \varphi}^n|_{U}$. 
By Lemma \ref{lem:Brown}, there exist a homeomorphism $\Phi$ of $U\times\mathbb{N}$ 
onto $X\times\{1,..,n\}\times \mathbb{N}$. 
Let $\Psi$ be a bijective map of $\{1,..,n\}\times \mathbb{N}$ onto 
$\mathbb{N}$, and define a homeomorphism $F$ of $X\times\mathbb{N}$ 
by $F=(id\times \Psi )\circ\Phi \circ (h\times id)$. 
It can be easily checked $F\in\mathrm{Aut}(\mathcal{R}_{G, \varphi}^\infty)$ 
and $\mu\times \delta (F(X\times\{1\}))= \mu\times \delta (U)=\lambda$. 
Therefore $\lambda\in \mathfrak{S}(\mathcal{R}_{G, \varphi})$. 

It is clear that $\mathcal{F}(\mathcal{R}_{G, \varphi})$ is countable because 
$X$ is the Cantor set. 
\end{proof}
\begin{Def}
Let $\mathcal{R}_{G, \varphi}$ be a uniquely ergodic Cantor minimal system 
with an $\mathcal{R}_{G, \varphi}$-invariant probability measure $\mu$. 
We call 
$\mathcal{F}(\mathcal{R}_{G, \varphi})$ the fundamental group of $\mathcal{R}_{G, \varphi}$, 
which is a multiplicative (countable) subgroup of $\mathbb{R}^{\times}_{+}$. 
\end{Def}
\begin{rem}\label{rem:definition}
(i) We need not assume that $\varphi$ is free. 
Moreover we can define the fundamental group of \'etale equivalence relations on 
Cantor sets by Proposition 2.3 in \cite{GPS2}. 
But the associated $C^*$-algebra 
$C(X)\rtimes_{\varphi} G$ ( $C_{r}^*(\mathcal{R})$ ) may not be simple. 
In this paper, we assume that $\varphi$ is free. 
\ \\
(ii) We do not know $\mu\times \delta (U_1 )=\mu\times \delta (U_2)$ implies that 
$\mathcal{R}_{G, \varphi}^{n}|_{U_1}$ is orbit equivalent to $\mathcal{R}_{G, \varphi}^{n}|_
{U_2}$ in general. It is known that if $\mathcal{R}_{G, \varphi}$ is a Cantor minimal 
$\mathbb{Z}^m$-system, then $\mathcal{R}_{G, \varphi}$ satisfies the 
analogous result of Hopf equivalence theorem. See, for example, 
\cite{LO} (Theorem 3.20) and \cite{M} (Theorem 6.11). \ \\
(iii) Since $M(\mathcal{R}_{G, \varphi})=\{\mu\}$, we may regard $\mathcal{R}_{G, \varphi}$ as 
an orbit equivalence relation $\mathcal{R}_{G, T_{\varphi}}$ arising from an ergodic measure 
preserving action $T_{\varphi}$ on a probability measure space $(X, \mu )$. 
It is clear that 
$\mathcal{F}(\mathcal{R}_{G, \varphi})\subseteq \mathcal{F}(\mathcal{R}_{G, T_{\varphi}})$. 
\end{rem}
For an additive subgroup $E$ of $\mathbb{R}$ containing 1, 
we define the positive inner multiplier group $IM_+(E)$ of $E$  by 
$$
IM_+(E) = \{t \in {\mathbb R}_+^{\times} \ | t \in E, t^{-1}  \in E, \text{ and } 
        tE = E \}. 
$$
By Lemma 3.6 in \cite{NW}, there exists a unital subring $R$ of $\mathbb{R}$ such that 
$IM_+(E)=R^\times_{+}$. 
Hence not all countable subgroups of $\mathbb{R}_{+}^{\times}$ 
arise as  $IM_+(E)$. For example, 
$\{9^n \in \mathbb{R}_{+}^{\times} \ | 
n \in {\mathbb Z} \}$ does not arise as $IM_+(E)$ 
for any additive subgroup $E$ of $\mathbb{R}$ containing 1. 

\begin{pro}\label{pro:subseteq}
Let $\mathcal{R}_{G, \varphi}$ be a uniquely ergodic Cantor minimal system 
with an $\mathcal{R}_{G, \varphi}$-invariant probability measure $\mu$. 
Then 
$$
\mathcal{F}(\mathcal{R}_{G, \varphi})\subseteq IM_{+}(T_{\mu}(D_m(\mathcal{R}_{G, \varphi}))) 
=IM_{+}(\{\int_{X} fd\mu :[f]\in D_m(\mathcal{R}_{G, \varphi})\}). 
$$
\end{pro}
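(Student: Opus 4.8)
The plan is to show directly that every element of $\mathcal{F}(\mathcal{R}_{G,\varphi})$ satisfies the three defining conditions of $IM_+(E)$, where I abbreviate $E := T_\mu(D_m(\mathcal{R}_{G, \varphi}))$. The displayed equality on the right is immediate: since $T_\mu([f]) = \int_X f\, d\mu$ by definition, the set $\{\int_X f\,d\mu : [f]\in D_m(\mathcal{R}_{G,\varphi})\}$ is literally the image $E$. By the Remark after the definition of $D_m$, $E$ is the additive subgroup of $\mathbb{R}$ generated by $\{\mu(V): V\text{ clopen in }X\}$, and $1=\mu(X)\in E$, so $IM_+(E)$ is defined. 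Thus the goal is: for $\lambda\in\mathcal{F}(\mathcal{R}_{G,\varphi})$, to verify $\lambda\in E$, $\lambda^{-1}\in E$, and $\lambda E=E$.

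First I would fix $\lambda\in\mathcal{F}(\mathcal{R}_{G,\varphi})$ and unpack the definition: there are a natural number $n$, a clopen set $U$ in $X\times[n]$, and a homeomorphism $h$ of $X$ onto $U$ with $h\times h(\mathcal{R}_{G,\varphi})=\mathcal{R}_{G,\varphi}^n|_U$ and $\lambda=\mu\times\delta(U)$. Writing $U=\bigcup_{k=1}^n U_k\times\{k\}$ with each $U_k$ clopen in $X$, we get $\lambda=\sum_{k=1}^n\mu(U_k)\in E$ at once, settling the first condition. The key preliminary step is to identify the unique invariant probability measure of $\mathcal{R}_{G,\varphi}^n|_U$. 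Since $\mu\times\delta$ is $\mathcal{R}_{G,\varphi}^n$-invariant, its renormalized restriction $\nu_U:=\lambda^{-1}(\mu\times\delta)|_U$ is an $\mathcal{R}_{G,\varphi}^n|_U$-invariant probability measure; because $\mathcal{R}_{G,\varphi}^n|_U$ is orbit equivalent to the uniquely ergodic system $\mathcal{R}_{G,\varphi}$, it too is uniquely ergodic (using Proposition \ref{pro:inclusion} and the computation of $\mathcal{R}_{G,\varphi}^n$-invariant measures in Section \ref{sec:minimal system}), so $\nu_U$ is \emph{the} unique such measure, and the pushforward $h_*\mu$ must equal $\nu_U$.

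Next I would derive $\lambda E\subseteq E$. For any clopen $V\subseteq X$, change of variables along $h$ gives $\mu(V)=\int_X 1_V\,d\mu=\int_U 1_{h(V)}\,d(h_*\mu)=\nu_U(h(V))=\lambda^{-1}(\mu\times\delta)(h(V))$. Since $h(V)$ is clopen in $X\times[n]$, we have $(\mu\times\delta)(h(V))\in E$, whence $\lambda\mu(V)\in E$. As $V$ ranges over all clopen subsets of $X$ the elements $\mu(V)$ generate $E$, and multiplication by $\lambda$ is a group homomorphism of $\mathbb{R}$, so $\lambda E\subseteq E$.

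To finish, I would invoke the group structure. By Theorem \ref{thm:fundamental group}, $\mathcal{F}(\mathcal{R}_{G,\varphi})$ is a group, so $\lambda^{-1}\in\mathcal{F}(\mathcal{R}_{G,\varphi})$; running the two preceding steps with $\lambda^{-1}$ yields $\lambda^{-1}\in E$ and $\lambda^{-1}E\subseteq E$. Combining, $E=\lambda(\lambda^{-1}E)\subseteq\lambda E\subseteq E$ forces $\lambda E=E$, so all three conditions hold and $\lambda\in IM_+(E)$. The main obstacle I anticipate is the measure-theoretic bookkeeping in the key step: pinning down the normalization constant $\lambda^{-1}$ in $\nu_U$ and justifying the identity $h_*\mu=\nu_U$, which rests on the fact that a topological orbit equivalence carries invariant probability measures bijectively and therefore transports unique ergodicity.
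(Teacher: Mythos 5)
Your argument is correct, and it reaches the conclusion by a genuinely different route from the paper. The paper's proof works entirely through the description $\mathcal{F}(\mathcal{R}_{G, \varphi})=\mathfrak{S}(\mathcal{R}_{G, \varphi})$ established in Theorem \ref{thm:fundamental group}: it takes automorphisms $F_1,F_2$ of $\mathcal{R}_{G, \varphi}^{\infty}$ on $X\times\mathbb{N}$ scaling $\mu\times\delta$ by $\lambda$ and $\lambda^{-1}$, and then uses compactness of $F_i(V\times\{1\})$ to see that its measure, namely $\lambda^{\pm 1}\mu(V)$, is a finite sum of measures of clopen subsets of $X$ and hence lies in $E=T_{\mu}(D_m(\mathcal{R}_{G, \varphi}))$. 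You instead stay at the finite level: you take the orbit equivalence $h:X\to U\subseteq X\times[n]$ itself, identify $h_{*}\mu$ with the normalized restriction $\lambda^{-1}(\mu\times\delta)|_{U}$ by unique ergodicity of $\mathcal{R}_{G,\varphi}^{n}|_{U}$ (via Proposition \ref{pro:inclusion} and the description of $M(\mathcal{R}_{G,\varphi}^{n})$), and read off $\lambda\mu(V)=(\mu\times\delta)(h(V))\in E$; the group structure from Theorem \ref{thm:fundamental group} then supplies $\lambda^{-1}$. What your approach buys is self-containedness and transparency: it avoids the infinite amplification and the machinery of Lemma \ref{lem:Brown} altogether, and it makes explicit the measure-transport fact ($h_{*}\mu=\lambda^{-1}(\mu\times\delta)|_{U}$) that the paper's proof of Theorem \ref{thm:fundamental group} only asserts as ``easily checked.'' What the paper's route buys is uniformity: once $\mathfrak{S}$ is available, the scaling homeomorphisms $F_1,F_2$ handle $\lambda$ and $\lambda^{-1}$ symmetrically without any renormalization bookkeeping. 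Both proofs ultimately rest on Theorem \ref{thm:fundamental group} for the inverse, so neither is logically lighter; yours is simply the more elementary unwinding.
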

\begin{proof}
Let $\lambda\in\mathcal{F}(\mathcal{R}_{G, \varphi})$. 
For any clopen set $U$ in $X\times\{1,..,n\}$, there exist 
clopen sets $V_1,..,V_n$ in $X$ such that $U=\cup_{k=1}^nV_{k}\times \{k\}$. 
Hence Theorem \ref{thm:fundamental group} implies that 
$\lambda ,\lambda^{-1} \in T_{\mu}(D_m(\mathcal{R}_{G, \varphi}))$. 
By the proof of Theorem \ref{thm:fundamental group}, there exist 
homeomorphisms $F_1$ and $F_2$ of $X\times\mathbb{N}$ such that 
$\mu\times\delta \circ F_1 =\lambda \mu\times\delta$ and 
$\mu\times\delta \circ F_2 =\lambda^{-1}\mu\times\delta$. 
Let $V$ be a clopen set in $X$. 
Then $\mu\times\delta (F_1(V\times \{1\}))= \lambda\mu (V)$ and 
$\mu\times\delta (F_2(V\times \{1\}))= \lambda^{-1}\mu (V)$. 
Therefore we see that 
$\lambda T_{\mu}(D_m(\mathcal{R}_{G, \varphi}))\subseteq T_{\mu}(D_m(\mathcal{R}_{G, \varphi}))$ 
and $\lambda^{-1} T_{\mu}(D_m(\mathcal{R}_{G, \varphi}))\subseteq T_{\mu}(D_m(\mathcal{R}_{G, \varphi}))$ 
because $F_1(V\times \{1\})$ and $F_2(V\times \{1\})$ are compact. 
Consequently $\lambda\in IM_{+}(T_{\mu}(D_m(\mathcal{R}_{G, \varphi})))$. 
\end{proof}
We shall show some examples. 
\begin{ex}\label{ex:elementary}
Let $p$ be a prime number, and let $X=\{0,1,..,p-1\}^{\mathbb{N}}$. 
Define $\varphi$ by the addition of $(1,0,0,...)$ with carry over the right 
and $\varphi ((p-1,p-1,...))=(0,0,...)$. 
Then $\mathcal{R}_{\mathbb{Z},\varphi}$ is a uniquely ergodic Cantor minimal system 
with $T_{\mu}(D_m(\mathcal{R}))=\mathbb{Z}[\frac{1}{p}]$. 
Put $U=\{(x_n)_n\in X:x_1=0\}$. 
It is easy to see that $\mathcal{R}_{\mathbb{Z},\varphi}|_U$ is orbit equivalent to 
$\mathcal{R}_{\mathbb{Z},\varphi}$ and $\mu (U)=\frac{1}{p}$. 
(In particular, the induced homeomorphism $\varphi_{U}$ is topologically conjugate to $\varphi$.) 
Hence 
$$
\{p^n:n\in\mathbb{Z}\}\subseteq \mathcal{F}(\mathcal{R}_{\mathbb{Z},\varphi})
\subseteq IM_{+}(T_{\mu}(D_m(\mathcal{R}_{\mathbb{Z},\varphi})))= \{p^n:n\in\mathbb{Z}\}. 
$$
Therefore $\mathcal{F}(\mathcal{R}_{\mathbb{Z},\varphi})=\{p^n:n\in\mathbb{Z}\}$. 
\end{ex}
\begin{ex}
Let $p$ and $q$ be prime numbers such that $gcd(p,q)=1$, and let 
$X=\{0,..,p-1\}^{\mathbb{N}}\times\{0,..,q-1\}^{\mathbb{N}}$. 
Define $\varphi_1$ (resp. $\varphi_2$) by $\varphi_1 ((x,y))= (\tilde{\varphi_1}(x), y)$ 
(resp. $\varphi_2 ((x,y))= (x,\tilde{\varphi_2}(y))$) for $(x,y)\in \{0,..p-1\}^{\mathbb{N}}\times
\{0,..q-1\}^{\mathbb{N}}$ 
where $\tilde{\varphi_1}$ and $\tilde{\varphi_2}$ are the homeomorphisms in 
Example \ref{ex:elementary}. 
Then $\mathcal{R}_{\mathbb{Z}^2, (\varphi_1,\varphi_2)}$ is a uniquely ergodic Cantor minimal 
system with $T(D_m(\mathcal{R}))=\mathbb{Z}[\frac{1}{p}, \frac{1}{q}]$. 
Put $V=\{(x_n)_n\in \{0,..,p-1\}^\mathbb{N}:x_1=0\}$ and $U=V\times \{0,..,q-1\}^{\mathbb{N}}$. 
Then $\mathcal{R}_{\mathbb{Z}^2, (\varphi_1,\varphi_2)}|U$ 
is equal to $\mathcal{R}_{\mathbb{Z}^2,(\varphi_{1U},\varphi_2)}$ where $\varphi_{1U}$ is the 
induced homeomorphism of $\varphi_1$ on $U$. 
Hence it is easy to see that $\mathcal{R}_{\mathbb{Z}^2, (\varphi_1,\varphi_2)}|U$ is 
orbit equivalent to $\mathcal{R}_{\mathbb{Z}^2, (\varphi_1,\varphi_2)}$. 
Therefore we see that 
$\mathcal{F}(\mathcal{R}_{\mathbb{Z}^2, (\varphi_1,\varphi_2)})=\{p^nq^m:n,m\in\mathbb{Z}\}$.
by a similar argument as in Example \ref{ex:elementary}. 
\end{ex}
\begin{ex}
If $\mathcal{R}_{\mathbb{F}^n, \varphi}$ is a uniquely ergodic Cantor minimal system 
arising from a free action of a non-amenable free group of finite rank, 
then $\mathcal{F}(\mathcal{R}_{\mathbb{F}^n, \varphi})=\{1\}$ by Remark \ref{rem:definition} 
(iii) and \cite{G}. 
Note that we do not know whether there exists such a example. 
\end{ex}
We shall show more interesting examples in the next section. 
Giordano-Matui-Putnam-Skau classification theorem enable us to compute 
many examples. 
\begin{thm}\label{thm:computation}
Let $\mathcal{R}_{G, \varphi}$ be a uniquely ergodic Cantor minimal system 
arising from a free action of a finitely generated abelian group. 
Then 
$$\mathcal{F}(\mathcal{R}_{G, \varphi})=IM_{+}(T_{\mu}(D_m(\mathcal{R})))=IM_{+}(\{\int_{X} fd\mu :[f]\in 
D_m(\mathcal{R}_{G, \varphi})\}). 
$$
\end{thm}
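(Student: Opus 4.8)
The plan is to prove the equality $\mathcal{F}(\mathcal{R}_{G, \varphi})=IM_{+}(T_{\mu}(D_m(\mathcal{R})))$ by two inclusions, having already obtained ``$\subseteq$'' for free in Proposition \ref{pro:subseteq}. So the entire content lies in the reverse inclusion $IM_{+}(T_{\mu}(D_m(\mathcal{R})))\subseteq \mathcal{F}(\mathcal{R}_{G, \varphi})$. By Theorem \ref{thm:fundamental group} it suffices to take any $\lambda\in IM_{+}(T_{\mu}(D_m(\mathcal{R})))$ and produce a clopen set $U\subseteq X\times\{1,\dots,n\}$ with $\mu\times\delta(U)=\lambda$ and $\mathcal{R}_{G, \varphi}^n|_U$ orbit equivalent to $\mathcal{R}_{G, \varphi}$.

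First I would use the defining properties of the inner multiplier group: since $\lambda,\lambda^{-1}\in T_{\mu}(D_m(\mathcal{R}))$ and $\lambda T_{\mu}(D_m(\mathcal{R}))=T_{\mu}(D_m(\mathcal{R}))$, I can realize $\lambda$ as $\mu\times\delta(U)=\int_X f\,d\mu$ for an appropriate integer-valued function, i.e.\ as the measure of a genuine clopen set $U$ in some $X\times\{1,\dots,n\}$. Concretely, choose $n$ and clopen sets $V_1,\dots,V_n\subseteq X$ with $\sum_{k=1}^n\mu(V_k)=\lambda$, and set $U=\bigcup_{k=1}^n V_k\times\{k\}$. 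The key step is then to identify the invariant $(D_m(\mathcal{R}_{G, \varphi}^n|_U),\,D_m(\mathcal{R}_{G, \varphi}^n|_U)^+,\,[1_U])$. Combining Proposition \ref{pro:inclusion} (restriction to a clopen set) with the Proposition identifying $D_m(\mathcal{R}_{G, \varphi}^n)$, I get an order-isomorphic copy of $(D_m(\mathcal{R}),D_m(\mathcal{R})^+,[1_U])$ where now $[1_U]$ corresponds under $T_\mu$ to $\lambda\cdot[1_X]$. The multiplier condition $\lambda T_\mu(D_m(\mathcal{R}))=T_\mu(D_m(\mathcal{R}))$ is exactly what guarantees that multiplication by $\lambda$ is an order automorphism of the (scaled) group, so that $(D_m(\mathcal{R}),D_m(\mathcal{R})^+,\lambda[1_X])$ is order-isomorphic to $(D_m(\mathcal{R}),D_m(\mathcal{R})^+,[1_X])$ as a unital ordered group.

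The decisive step, and the main obstacle, is the passage from this isomorphism of ordered $K^0$-type invariants back to an actual topological orbit equivalence of the Cantor systems. This is precisely where the hypothesis that $G$ is a finitely generated abelian group is indispensable: by the Giordano--Matui--Putnam--Skau classification theorem (Theorem 2.5 in \cite{GMPS2}), the triple $(D_m(\mathcal{R}),D_m(\mathcal{R})^+,[1_X])$ is a \emph{complete} orbit-equivalence invariant for such systems. Since $\mathcal{R}_{G, \varphi}^n|_U$ is again a minimal (indeed uniquely ergodic) system in this same class, and its invariant is isomorphic to that of $\mathcal{R}_{G, \varphi}$, the classification yields a homeomorphism implementing the orbit equivalence. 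Hence $\lambda\in\mathcal{F}(\mathcal{R}_{G, \varphi})$, completing the reverse inclusion and therefore the equality. The final equality $IM_{+}(T_{\mu}(D_m(\mathcal{R})))=IM_{+}(\{\int_X f\,d\mu:[f]\in D_m(\mathcal{R}_{G, \varphi})\})$ is then immediate from the Remark identifying $T_\mu(D_m(\mathcal{R}))$ with $\{\int_X f\,d\mu:[f]\in D_m(\mathcal{R})\}$.
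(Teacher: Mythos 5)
Your overall strategy --- get ``$\subseteq$'' from Proposition \ref{pro:subseteq}, realize $\lambda$ as the measure of a clopen set, compute the invariant of the restricted system via Proposition \ref{pro:inclusion}, and then invoke a classification theorem to convert the isomorphism of invariants into an actual orbit equivalence --- is the same as the paper's. But two steps that you treat as routine are exactly where the real content (and the hypotheses) sit. First, the claim that $\lambda\in IM_+(T_\mu(D_m(\mathcal{R})))$ can be realized as $\mu\times\delta(U)$ for a \emph{clopen} set $U$ is not automatic: $\lambda=\int_X f\,d\mu$ only gives $f=\sum_k m_k 1_{U_k}$ with the $m_k\in\mathbb{Z}$ possibly negative, and turning this into a genuine clopen set of measure exactly $\lambda$ requires using the dynamics. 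The paper invokes Lemma 2.5 of Glasner--Weiss \cite{GW}, which does precisely this for uniquely ergodic Cantor minimal $\mathbb{Z}$-systems and $0<\lambda\le 1$; you need to cite a result of this kind or prove it, since your ``i.e.'' silently bridges this gap.

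Second, to apply the classification you must know that $\mathcal{R}^n_{G,\varphi}|_U$ lies in the class for which the triple $(D_m,D_m^+,[1])$ is a \emph{complete} invariant. A clopen restriction of a free action of a finitely generated abelian group is not obviously again such an action (except for $G=\mathbb{Z}$, where it is the induced transformation), so ``again a system in this same class'' needs justification. The paper sidesteps both problems at once: it first uses \cite{GMPS2} to replace $\mathcal{R}_{G,\varphi}$ by an orbit-equivalent Cantor minimal $\mathbb{Z}$-system, and then treats only $\lambda\le 1$ (the case $\lambda>1$ follows because $\mathcal{F}$ and $IM_+$ are multiplicative groups), so that $U\subseteq X$, the restriction $\mathcal{R}_{\mathbb{Z},\varphi}|_U$ is honestly a Cantor minimal $\mathbb{Z}$-system, and Theorem 2.2 of \cite{GPS1} applies directly. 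With these two repairs --- the initial reduction to $G=\mathbb{Z}$ together with the restriction to $\lambda\le1$, and the Glasner--Weiss realization lemma --- your argument becomes the paper's proof.
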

\begin{proof}
By the result in \cite{GMPS2}, $\mathcal{R}_{G, \varphi}$ is orbit equivalent to 
a Cantor minimal $\mathbb{Z}$-system. Hence we may assume that $G=\mathbb{Z}$. 

We have 
$\mathcal{F}(\mathcal{R}_{G, \varphi})\subseteq 
IM_{+}(T_{\mu}(D_m(\mathcal{R}_{G, \varphi})))$  by Proposition \ref{pro:subseteq}. 
Conversely, let $\lambda$ be an element in 
$IM_{+}(T_{\mu}(D_m(\mathcal{R}_{\mathbb{Z},\varphi})))$ with $\lambda\leq 1$. 
Then we see that there exists a clopen set $U$ in $X$ such that 
$\mu (U)=\lambda$ by Lemma 2.5 in \cite{GW}. 
Define an additive homomorphism $\psi$ of $T_{\mu}(D_m(\mathcal{R}_{\mathbb{Z},\varphi}))$ 
by $\psi (t)= \lambda t$. Then $\psi$ is an order isomorphism and $\psi (1)=\lambda$. 
Proposition \ref{pro:inclusion} implies that 
$$(D_m(\mathcal{R}_{\mathbb{Z},\varphi}|_{U}),D_m(\mathcal{R}_{\mathbb{Z},\varphi}|_{U})_{+},
[1_{U}]) \cong
(T_{\mu}(D_m(\mathcal{R}_{\mathbb{Z},\varphi})),T_{\mu}(D_m(\mathcal{R}_{\mathbb{Z},\varphi}))_{+},
\lambda).$$ 
Hence 
$\mathcal{R}_{\mathbb{Z},\varphi}|_{U}$ is orbit equivalent to 
$\mathcal{R}_{\mathbb{Z},\varphi}$ by Theorem 2.2 in \cite{GPS1} 
because 
$\mathcal{R}_{\mathbb{Z},\varphi}|_{U}$ is a Cantor minimal $\mathbb{Z}$-system 
(see Section \ref{sec:minimal system} and \cite{GPS1}). 
Therefore $\lambda\in \mathcal{F}(\mathcal{R}_{G, \varphi})$. 
\end{proof}
\begin{cor}
If $R$ is a countable unital subring of $\mathbb{R}$, 
then there exists a uniquely ergodic Cantor minimal $\mathbb{Z}$-system 
$\mathcal{R}_{\mathbb{Z}, \varphi}$ such that $\mathcal{F}(\mathcal{R}_{\mathbb{Z}, \varphi})=R_{+}^\times$. 
Conversely if $\mathcal{R}_{G, \varphi}$ arises from a uniquely ergodic free action of 
a finitely generated abelian group, then there exists a countable unital subring $R$ of $\mathbb{R}$ 
such that $\mathcal{F}(\mathcal{R}_{G, \varphi})=R_{+}^\times$. 
\end{cor}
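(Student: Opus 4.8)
The plan is to derive both assertions from Theorem \ref{thm:computation}, which identifies $\mathcal{F}(\mathcal{R}_{G,\varphi})$ with the inner multiplier group $IM_+(E)$ of the ordered group $E := T_\mu(D_m(\mathcal{R}_{G,\varphi}))\subseteq\mathbb{R}$, combined with Lemma 3.6 in \cite{NW} and a realization theorem for simple dimension groups. The elementary algebraic fact underlying everything is that for a unital subring $R$ of $\mathbb{R}$, regarded as an additive subgroup containing $1$, one has $IM_+(R)=R_+^\times$: indeed $t\in R_+^\times$ gives $t,t^{-1}\in R$ and $tR=t(t^{-1}R)=R$, while conversely $t\in IM_+(R)$ forces $t$ to be a positive unit of $R$.

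For the converse (second statement) I would argue as follows. Let $\mathcal{R}_{G,\varphi}$ arise from a uniquely ergodic free action of a finitely generated abelian group with invariant measure $\mu$. By Theorem \ref{thm:computation}, $\mathcal{F}(\mathcal{R}_{G,\varphi})=IM_+(E)$ with $E=T_\mu(D_m(\mathcal{R}_{G,\varphi}))$, an additive subgroup of $\mathbb{R}$ with $1\in E$ that is countable because $X$ is the Cantor set. By Lemma 3.6 in \cite{NW} there is a unital subring $R$ of $\mathbb{R}$ with $IM_+(E)=R_+^\times$; since the multiplier ring satisfies $R\setminus\{0\}\subseteq E$ (any $a$ with $aE\subseteq E$ satisfies $a=a\cdot 1\in E$), $R$ is countable. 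This yields $\mathcal{F}(\mathcal{R}_{G,\varphi})=R_+^\times$.

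For the realization (first statement), given a countable unital subring $R\subseteq\mathbb{R}$ I would build the system from its dimension-group data. First suppose $R\ne\mathbb{Z}$; then $R$, as a subgroup of $\mathbb{R}$, is non-cyclic hence dense, and $(R,R\cap\mathbb{R}_{\ge 0},1)$ is a simple dimension group (totally ordered, unperforated, every nonzero positive element an order unit) admitting a unique state, namely the inclusion $R\hookrightarrow\mathbb{R}$ (one checks $s(x)=x$ by squeezing $x$ between rationals via $qx\le p\Leftrightarrow s(x)\le p/q$ for $p,q\in\mathbb{Z}$). By the Herman--Putnam--Skau realization theorem for simple dimension groups (see \cite{GPS1}, \cite{GPS2}) there is a Cantor minimal $\mathbb{Z}$-system $\mathcal{R}_{\mathbb{Z},\varphi}$ whose ordered $K^0$-group with order unit is isomorphic to $(R,R_+,1)$. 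As this dimension group has a unique state, the invariant probability measures of $\mathcal{R}_{\mathbb{Z},\varphi}$ form a singleton, so the system is uniquely ergodic; moreover the unique state is injective, so the infinitesimal subgroup is trivial and $T_\mu(D_m(\mathcal{R}_{\mathbb{Z},\varphi}))=R$. Theorem \ref{thm:computation} then gives $\mathcal{F}(\mathcal{R}_{\mathbb{Z},\varphi})=IM_+(R)=R_+^\times$. The remaining case $R=\mathbb{Z}$, where $R_+^\times=\{1\}$, I would handle by realizing instead the non-cyclic simple dimension group $E=\mathbb{Z}+\mathbb{Z}\alpha$ with $\alpha$ transcendental; a short computation with $GL_2(\mathbb{Z})$ shows $IM_+(E)=\{1\}=\mathbb{Z}_+^\times$, so the same realization argument produces a uniquely ergodic Cantor minimal $\mathbb{Z}$-system with $\mathcal{F}=\{1\}$.

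The main obstacle is the realization step: producing a uniquely ergodic Cantor minimal $\mathbb{Z}$-system with prescribed ordered $K^0$-group. This rests on the Bratteli diagram machinery and the Herman--Putnam--Skau correspondence between simple non-cyclic dimension groups and Cantor minimal $\mathbb{Z}$-systems, together with the identification of invariant probability measures with states on the dimension group, from which unique ergodicity is read off from uniqueness of the state on $(R,R_+,1)$. A secondary point requiring care is the cyclic exception $R=\mathbb{Z}$, which cannot be realized with $E=R$ (a Cantor minimal system always has clopen sets of measure strictly between $0$ and $1$, forcing $T_\mu(D_m)\ne\mathbb{Z}$) and is instead treated via a separate dense $E$ with trivial inner multiplier group.
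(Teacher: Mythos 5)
Your proof follows essentially the same route as the paper: the converse is Theorem \ref{thm:computation} combined with Lemma 3.6 of \cite{NW}, and the realization is the Herman--Putnam--Skau theorem (Corollary 6.3 of \cite{HPS}) applied to $(R,R_{+},1)$ followed by Theorem \ref{thm:computation}. The one point where you go beyond the paper is the case $R=\mathbb{Z}$: the paper invokes Corollary 6.3 of \cite{HPS} uniformly, although that realization result excludes the cyclic dimension group $(\mathbb{Z},\mathbb{Z}_{+},1)$ (and indeed, as you observe, no Cantor minimal system can have $T_{\mu}(D_m(\mathcal{R}))=\mathbb{Z}$), so your separate treatment via $E=\mathbb{Z}+\mathbb{Z}\alpha$ with $\alpha$ transcendental correctly supplies this missing case --- consistent with the paper's own examples of trivial fundamental group, such as the Denjoy systems of Proposition \ref{pro:Denjoy} with $\theta$ not quadratic.
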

\begin{proof}
Let $R$ be a countable unital subring of $\mathbb{R}_{+}^\times$. 
By Corollary 6.3 in \cite{HPS}, there exists a uniquely ergodic Cantor minimal $\mathbb{Z}$-system 
$\mathcal{R}_{\mathbb{Z}, \varphi}$ such that 
$$(D_m(\mathcal{R}_{\mathbb{Z},\varphi}),D_m(\mathcal{R}_{\mathbb{Z},\varphi})_{+}, 
[1_{X}])\cong (R,R_{+},1).$$ 
Therefore we see that $\mathcal{F}(\mathcal{R}_{G, \varphi})=R_{+}^\times$ 
by Theorem \ref{thm:computation}. 

Conversely, let $\mathcal{R}_{G, \varphi}$ be a uniquely ergodic Cantor minimal system 
arising from a free action of a finitely generated abelian group. 
Then we have $\mathcal{F}(\mathcal{R}_{G, \varphi})=IM_{+}(T_{\mu}(D_m(\mathcal{R})))$ 
by Theorem \ref{thm:computation}. 
Therefore Lemma 3.6 in \cite{NW} implies that there exists a countable unital subring $R$ of $\mathbb{R}$ 
such that $\mathcal{F}(\mathcal{R}_{G, \varphi})=R_{+}^\times$. 
\end{proof}
By the theorem above, $\{9^n:n\in\mathbb{Z}\}$ cannot be realized as the 
fundamental group of a uniquely ergodic Cantor minimal system arising from a 
free action of a finitely generated abelian group. 

Let $A$ be a simple $C^*$-algebra with a unique trace $\tau$. 
The fundamental group $\mathcal{F}(A)$ of $A$ is defined by the set 
$$
\{ \tau\otimes Tr(p) \in \mathbb{R}^{\times}_{+}\ | \ 
 p \text{ is a projection in } M_n(A) \text{ such that } pM_n(A)p  \cong A \}
$$
where $Tr$ is the unnormalized trace on $M_n(\mathbb{C})$. 
We refer the reader to \cite{Bla} for basic facts of operator algebras. 
We denote by 
$\tau_*$ the map $K_0(A)\rightarrow \mathbb{R}$ induced by a trace $\tau$ on $A$. 
By Proposition 3.7 in \cite{NW}, we have 
$\mathcal{F}(A)\subseteq IM_{+}(\tau_{*}(K_0(A)))$. 
If $\varphi$ is a uniquely ergodic free minimal action of $G$ on $X$, 
then the associated $C^*$-algebra $C(X)\rtimes_{\varphi}G$ is a simple $C^*$-algebra 
with unique trace. 
We do not know whether there exists a relation between orbit 
equivalence of Cantor minimal $G$-systems and $C^*$-isomorphism of associated $C^*$-algebras 
in general. But we have the following theorem. 
\begin{thm}
Let $\mathcal{R}_{\mathbb{Z}^n, \varphi}$ be a uniquely ergodic Cantor minimal system 
arising from a free action of a finitely generated free abelian group $\mathbb{Z}^n$. 
Then $\mathcal{F}(C(X)\rtimes_{\varphi}\mathbb{Z}^n)\subseteq \mathcal{F}
(\mathcal{R}_{\mathbb{Z}^n, \varphi})$. 
\end{thm}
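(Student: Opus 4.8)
The plan is to reduce the asserted inclusion to an equality of trace ranges and then feed it into the two descriptions already at our disposal. Write $A=C(X)\rtimes_\varphi\mathbb{Z}^n$. Proposition 3.7 in \cite{NW} gives $\mathcal{F}(A)\subseteq IM_{+}(\tau_*(K_0(A)))$, while Theorem \ref{thm:computation} gives $\mathcal{F}(\mathcal{R}_{\mathbb{Z}^n,\varphi})=IM_{+}(T_\mu(D_m(\mathcal{R})))$. Thus, as soon as I establish that the two additive subgroups of $\mathbb{R}$ inside these multiplier groups coincide, namely $\tau_*(K_0(A))=T_\mu(D_m(\mathcal{R}_{\mathbb{Z}^n,\varphi}))$, the theorem follows at once from $\mathcal{F}(A)\subseteq IM_{+}(\tau_*(K_0(A)))=IM_{+}(T_\mu(D_m(\mathcal{R})))=\mathcal{F}(\mathcal{R}_{\mathbb{Z}^n,\varphi})$.

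One inclusion is routine. The canonical embedding $C(X)\hookrightarrow A$ induces $K_0(C(X))=C(X,\mathbb{Z})\to K_0(A)$, and for a clopen $U\subseteq X$ the image of the projection $1_U$ satisfies $\tau(1_U)=\mu(U)$. Since $T_\mu(D_m(\mathcal{R}))$ is exactly the additive group generated by $\{\mu(U):U\text{ clopen}\}$ (the Remark following the definition of $D_m$), this yields $T_\mu(D_m(\mathcal{R}))\subseteq\tau_*(K_0(A))$.

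The reverse inclusion $\tau_*(K_0(A))\subseteq T_\mu(D_m(\mathcal{R}))$ is the substantive point, and I would argue it by induction on $n$ through the Pimsner--Voiculescu sequence. Write $A=B\rtimes_{\varphi_n}\mathbb{Z}$ with $B=C(X)\rtimes\mathbb{Z}^{n-1}$, and let $\tilde\tau$ extend the (unique, hence $\varphi_n$-invariant) trace $\tau$ on $B$. The six-term sequence presents $K_0(A)$ as an extension of $\ker(1-\varphi_{n*})$ on $K_1(B)$ by $\operatorname{coker}(1-\varphi_{n*})$ on $K_0(B)$. On the cokernel summand $\tilde\tau_*$ agrees with $\tau_*$ on $K_0(B)$, because $\tau\circ\varphi_n=\tau$ annihilates the image of $1-\varphi_{n*}$; by the inductive hypothesis these classes contribute precisely $T_\mu(D_m(\mathcal{R}))$. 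Everything therefore reduces to showing that the connecting map adds nothing to the trace, that is $\tilde\tau_*(\partial(K_1(B)))\subseteq T_\mu(D_m(\mathcal{R}))$. The base case $n=1$ is immediate, since $K_1(C(X))=0$ forces $K_0(C(X)\rtimes\mathbb{Z})=\operatorname{coker}(1-\varphi_{1*})$ with no connecting contribution at all.

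This final step is the main obstacle, and it is exactly where total disconnectedness is decisive. For a connected base the analogous trace can be nonzero: in the irrational rotation algebra $A_\theta=C(\mathbb{T})\rtimes\mathbb{Z}$ the Rieffel projection gives $\tilde\tau_*(\partial[z])=\theta$, a value produced by the smooth rotation flow. Here no such flow exists, as $X$ is totally disconnected and $K_1(C(X))=0$; iterating Pimsner--Voiculescu (equivalently, the Kasparov spectral sequence, which degenerates because only the $q=0$ row survives) identifies $K_0(A)$ with $\bigoplus_{p\text{ even}}H_p(\mathbb{Z}^n,C(X,\mathbb{Z}))$, the coefficients being the discrete group $C(X,\mathbb{Z})$. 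The invariant measure $\mu$ represents the degree-zero cyclic cocycle $\tau$, and, the Connes pairing being compatible with the homological grading, $\tau$ pairs nontrivially only with the $p=0$ summand --- on which $\tau_*$ realizes exactly $T_\mu(D_m(\mathcal{R}))$ --- and annihilates every $H_p$ with $p\ge 2$. Concretely, at each stage the connecting-map image lands in strictly higher homological degree, where the measure-trace vanishes, so $\tilde\tau_*(\partial(K_1(B)))=0$ and the induction closes. I expect the careful verification of the grading-compatibility of the trace pairing (or, at the elementary level, the direct vanishing of $\tilde\tau(\partial[v])$ for a unitary $v\in B$, using that $\varphi_n$ is merely a homeomorphism of a Cantor set and carries no infinitesimal generator) to be the genuine technical crux.
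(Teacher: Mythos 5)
Your overall reduction is exactly the one the paper uses: combine $\mathcal{F}(A)\subseteq IM_{+}(\tau_*(K_0(A)))$ (Proposition 3.7 of \cite{NW}) with $\mathcal{F}(\mathcal{R}_{\mathbb{Z}^n,\varphi})=IM_{+}(T_\mu(D_m(\mathcal{R})))$ (Theorem \ref{thm:computation}), so that everything rests on the single equality $\tau_*(K_0(C(X)\rtimes_\varphi\mathbb{Z}^n))=T_\mu(D_m(\mathcal{R}_{\mathbb{Z}^n,\varphi}))$. Your easy inclusion and your $n=1$ base case are fine. The problem is the reverse inclusion for $n\geq 2$: the statement you are trying to establish there \emph{is} the Gap Labeling Theorem (conjectured by Bellissard, proved independently in \cite{Bell}, \cite{BO} and \cite{KP}; see Theorem D.1 in \cite{MS}), which is precisely what the paper cites at this point rather than proves. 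Your Pimsner--Voiculescu induction correctly locates the difficulty --- one must show $\tilde\tau_*(\partial(K_1(B)))\subseteq T_\mu(D_m(\mathcal{R}))$, equivalently that the trace contributes nothing on the higher rows of the Kasparov spectral sequence --- but the sentence ``the Connes pairing being compatible with the homological grading, $\tau$ pairs nontrivially only with the $p=0$ summand'' is an assertion, not an argument. There is no formal grading-compatibility that forces a degree-zero trace to kill the classes filtered in degree $p\geq 2$; your own example of $A_\theta$ shows the connecting map can carry trace, and ruling this out when the fibre is a Cantor set is exactly where the published proofs invoke heavy machinery (a longitudinal/measured index theorem for the associated lamination in \cite{BO}, Poincar\'e-type duality and an index computation in \cite{KP}, a geometric approximation by tilings in \cite{Bell}). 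You acknowledge this step as ``the genuine technical crux,'' and it remains unproved in your write-up.

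So the proposal is not wrong in outline, but it has a genuine gap: the core analytic input is left open, and it is not a routine verification --- it is a theorem whose known proofs each occupy a substantial paper. The correct fix is simply to cite the Gap Labeling Theorem for the equality $T_{\mu}(D_m(\mathcal{R}_{\mathbb{Z}^n, \varphi}))=\tau_{*}(K_0(C(X)\rtimes_{\varphi}\mathbb{Z}^n))$, as the paper does, after which your two-line conclusion goes through verbatim.
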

\begin{proof}
The Gap Labeling Theorem (Theorem D.1 in \cite{MS}, see also \cite{Bell}, \cite{BO} and \cite{KP}) 
implies that 
$$
T_{\mu}(D_m(\mathcal{R}_{\mathbb{Z}^n, \varphi}))=\tau_{*}(K_0(C(X)\rtimes_{\varphi}\mathbb{Z}^n)). 
$$
By Theorem \ref{thm:computation}, 
$\mathcal{F}(\mathcal{R}_{\mathbb{Z}^n, \varphi})=IM_{+}(T(D_m(\mathcal{R}_{\mathbb{Z}^n, \varphi})))$. 
Hence we see that 
$\mathcal{F}(C(X)\rtimes_{\varphi}\mathbb{Z})\subseteq \mathcal{F}(\mathcal{R}_{\mathbb{Z}, 
\varphi})$ because we have 
$\mathcal{F}(C(X)\rtimes_{\varphi}\mathbb{Z}) \subseteq IM_{+}(\tau_{*}(K_0(C(X)\rtimes_{\varphi}
\mathbb{Z}^n)))$ by Proposition 3.7 in \cite{NW}. 
\end{proof}
Note that we do not know whether $C(X)\rtimes_{\varphi} \mathbb{Z}^2$ belongs to classifiable 
classes by $K$-groups. 
We shall show that there exists a uniquely ergodic Cantor minimal $\mathbb{Z}$-system 
$\mathcal{R}_{\mathbb{Z}, \varphi}$ 
such that $\mathcal{F}(C(X)\rtimes_{\varphi}\mathbb{Z})\neq \mathcal{F}
(\mathcal{R}_{\mathbb{Z}, \varphi})$. 
By Corollary 6.3 in \cite{HPS} and \cite{EHS}, 
there exists  a uniquely ergodic Cantor minimal $\mathbb{Z}$-system 
$\mathcal{R}_{\mathbb{Z}, \varphi}$ 
such that 
$$
K_0(C(X)\rtimes_{\varphi}\mathbb{Z})=\{(\frac{b}{9^a},c) \in \mathbb{R} \times \mathbb{Z} \ | 
\ a,b,c\in\mathbb{Z},b\equiv c\; \mathrm{mod}\; 8\},
$$
$$
K_0(C(X)\rtimes_{\varphi}\mathbb{Z})_{+}=\{(\frac{b}{9^a},c)\in K_0(A):\frac{b}{9^a}>0\}\cup \{(0,0)\}
\ \ \text{and} \ \   [1_A]_0=(1,1). 
$$ 
Then we see that 
$\mathcal{F}(\mathcal{R}_{\mathbb{Z}, \varphi})=\{3^n:n\in\mathbb{Z}\}$ and 
$\mathcal{F}(C(X)\rtimes_{\varphi}\mathbb{Z})=\{9^n:n\in\mathbb{Z}\}$. 
See Example 3.12 in \cite{NW2}. 

\section{Examples}\label{sec:examples}
First we shall consider a generalization of Example \ref{ex:elementary}. 
Let $\{n_i\}_{i\in\mathbb{N}}$ be a sequence of natural numbers, each greater than or equal to 2 
and let $X=\Pi_{i\in\mathbb{N}}\{0,..,n_i-1\}$. Define 
a homeomorphism $\varphi$ by the addition of $(1,0,0,...)$ with carry over the right 
and $\varphi ((n_1-1,n_2-1,...))=(0,0,...)$. 
Then $\mathcal{R}_{\mathbb{Z},\varphi}$ is a uniquely ergodic Cantor minimal system. 
This system is called the \textit{odometer system} associated with $\{n_i\}_{i\in\mathbb{N}}$. 
For each prime number $p$, define $\epsilon_{p}$ in 
$\mathbb{Z}_{+}\cup\{\infty\}$ by 
$\sup\{m:p^m\;\mathrm{divides}\;\Pi_{i=1}^kn_{i}\;\mathrm{for}\;\mathrm{some}\;k\in\mathbb{N}\}$. 
Let $S$ be the set of prime numbers and 
$N:=\prod_{p\in S}p^{\epsilon_{p}}$. We call $N$ the \textit{super natural number}. 
It is known that two odometer systems $\varphi_1$ and $\varphi_2$ are topologically 
conjugate if and only if their super natural numbers are equal. 
\begin{pro}
Let $\mathcal{R}_{\mathbb{Z},\varphi}$ be an odometer system associated with 
$\{n_i\}_{i\in\mathbb{N}}$. 
Then 
$$\mathcal{F}(\mathcal{R}_{\mathbb{Z},\varphi})=  
\{p_1^{m_1}\cdot\cdot\cdot p_{n}^{m_n}:n\in\mathbb{N}, \epsilon_{p_i}=\infty ,m_i\in\mathbb{N} \}
$$ 
where 
$\epsilon (p)=\sup\{m:p^m\;\mathrm{divides}\;\Pi_{i=1}^kn_{i}\;\mathrm{for}\;\mathrm{some}\;k\in\mathbb{N}\}$. 
\end{pro}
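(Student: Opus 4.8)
The plan is to invoke Theorem \ref{thm:computation}. Since the odometer system arises from a free uniquely ergodic action of the finitely generated abelian group $\mathbb{Z}$, that theorem reduces the whole problem to the arithmetic computation of $IM_+(T_{\mu}(D_m(\mathcal{R}_{\mathbb{Z},\varphi})))$. So first I would determine the additive group $E := T_{\mu}(D_m(\mathcal{R}_{\mathbb{Z},\varphi})) \subseteq \mathbb{R}$ explicitly, and then compute $IM_+(E)$.

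For the first step I would use that the unique invariant measure $\mu$ is the product of the uniform measures on the factors $\{0,\dots,n_i-1\}$, so a cylinder constraining the first $k$ coordinates has measure $1/D_k$, where $D_k := \prod_{i=1}^k n_i$. Every clopen subset of $X$ is a finite disjoint union of cylinders, and the $D_k$ form a chain under divisibility, so by the Remark in Section \ref{sec:minimal system} one gets $E = \bigcup_{k} \frac{1}{D_k}\mathbb{Z}$. The useful reformulation is through $p$-adic valuations: noting that $\epsilon_p = \sup_k v_p(D_k)\in \mathbb{Z}_+\cup\{\infty\}$, I would show $E = \{x\in\mathbb{Q} : v_p(x)\ge -\epsilon_p \text{ for every prime } p\}$. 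The inclusion $\subseteq$ is immediate; for $\supseteq$, only finitely many primes appear in the denominator of a given $x$, and since $v_p(D_k)\uparrow \epsilon_p$ for each of them, a single large $k$ makes $D_k x\in\mathbb{Z}$.

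With this description of $E$ in hand, I would compute $IM_+(E)$ directly. Multiplication by a positive rational $t$ shifts valuations, so $tE = \{y\in\mathbb{Q} : v_p(y)\ge v_p(t)-\epsilon_p \text{ for all } p\}$. Testing on the elements $q^m$, which lie in a valuation-defined group of this type exactly when $m$ exceeds the $q$-bound, shows that the defining bound at each prime is recovered from the set itself, one prime at a time. Hence $t\in E$ and $t^{-1}\in E$ translate to $|v_p(t)|\le \epsilon_p$ for all $p$, while $tE=E$ forces $v_p(t)-\epsilon_p = -\epsilon_p$, that is $v_p(t)=0$, for every $p$ with $\epsilon_p<\infty$; conversely these conditions clearly suffice. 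Therefore $IM_+(E)$ is exactly the set of positive rationals supported on the primes with $\epsilon_p=\infty$, namely $\{p_1^{m_1}\cdots p_n^{m_n} : \epsilon_{p_i}=\infty,\ m_i\in\mathbb{Z}\}$, which combined with Theorem \ref{thm:computation} gives the asserted formula.

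The one genuinely substantive point, and what I expect to be the main obstacle, is the role of the condition $tE=E$ in the definition of $IM_+$. The group $E$ fails to be closed under multiplication precisely when some $\epsilon_p$ is finite and positive, so the membership conditions $t,t^{-1}\in E$ are strictly weaker than $t\in IM_+(E)$; it is exactly $tE=E$ that eliminates the primes with $0<\epsilon_p<\infty$ and keeps only those occurring to infinite order. Making this discrepancy precise, and confirming via the $q^m$ test elements that equality of two valuation-defined subgroups can be checked prime by prime, is the step to get right; the rest is routine once $E$ is identified.
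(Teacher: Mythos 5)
Your proposal is correct and follows essentially the same route as the paper: the paper's proof consists precisely of identifying $T_{\mu}(D_m(\mathcal{R}_{\mathbb{Z},\varphi}))$ as the group of rationals whose denominators are bounded prime-by-prime by the $\epsilon_p$, and then citing Theorem \ref{thm:computation}; you do the same, merely spelling out the valuation-theoretic computation of $IM_{+}(E)$ that the paper leaves implicit. Your final answer with $m_i\in\mathbb{Z}$ is the correct one (the set must be a group), so the $m_i\in\mathbb{N}$ in the statement should be read as a typo rather than a discrepancy in your argument.
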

\begin{proof}
Since $T_{\mu}(D_m(\mathcal{R}_{\mathbb{Z},\varphi}))=\{\frac{n}{q}:n\in\mathbb{Z},\;q=p^m\;\mathrm{where}\;p\;\mathrm{is}\;\mathrm{a}\;\mathrm{prime}\;
\mathrm{number}\;\mathrm{and}\\ 0\leq m\leq \epsilon_{p}, m<\infty \}$, 
Theorem \ref{thm:computation} implies the conclusion. 
\end{proof}
We shall consider fundamental groups of certain Denjoy systems. 
A \textit{Denjoy homeomorphism} $\varphi$ is an homeomorphism of $\mathbb{T}$ such that 
the rotation number $\rho (\varphi )$ is irrational and $\varphi$ is not conjugate 
to a pure rotation. Such a homeomorphism has a unique minimal set $\Sigma$ which is 
homeomorphic to the Cantor set. The restriction of a Denjoy homeomorphism to its unique 
minimal set $\Sigma$ is called the \textit{Denjoy system}. 
It is known that a Denjoy system $\mathcal{R}_{\mathbb{Z},\varphi |_{\Sigma}}$ is a uniquely ergodic 
Cantor minimal system. 
We refer the reader to \cite{Mar}, \cite{MSY} and \cite{Put} for details of Denjoy systems. 
It is known that for a Denjoy homeomorphism $\varphi$ there exists an orientation preserving 
surjective continuous map $h$ of $\mathbb{T}$ to $\mathbb{T}$ such that $h\circ \varphi =R_{\rho (\varphi)} 
\circ h$ where $R_{\rho (\varphi)}$ is the rotation such that 
$R_{\rho (\varphi)}([t])=[t+\rho (\varphi)]$. (We regard $\mathbb{T}$ as 
$\{e^{2\pi it}:t\in \mathbb{R} \}$.) 
With notation as above, put $Q(\varphi)=h(\{z\in \mathbb{Z}:\sharp (h^{-1}(h(z)))\neq 1\})$. 
The set $Q(\varphi )$ is is countable and $R_{\rho (\varphi)}$-invariant. 
Conversely, any countable $R_{\rho (\phi)}$-invariant subset of $\mathbb{T}$ can be realized 
as $Q(\varphi)$ of some Denjoy homeomorphism $\varphi$. 
Markley showed that two Denjoy homeomorphisms $\varphi_1$ and $\varphi_2$ are conjugate 
via an orientation preserving conjugating map 
if and only if $\rho (\varphi_1)=\rho (\varphi_2)$ and 
$Q(\varphi_1 )=R_{\theta} (Q(\varphi_2))$ for some $\theta\in (0,1]$. 
\begin{pro}\label{pro:Denjoy}
Let $\theta$ be an irrational number in $[0,1]$ and $\varphi_{\theta}$ a Denjoy homeomorphism with 
$\rho (\varphi_{\theta})=\theta$ and $Q(\varphi_{\theta})=\{e^{2\pi in\theta }:n\in\mathbb{Z}\}$. 
Then $\mathcal{F}(\mathcal{R}_{\mathbb{Z},\varphi_{\theta} |_{\Sigma}})$ is a singly 
generated group. 
More precisely, if $\theta$ is not a 
quadratic number, then 
$\mathcal{F}(\mathcal{R}_{\mathbb{Z},\varphi_{\theta} |_{\Sigma}})=\{1\}$.   
If $\theta$ is a quadratic number, 
then $\mathcal{F}(\mathcal{R}_{\mathbb{Z},\varphi_{\theta} |_{\Sigma}})
= \{\epsilon _0^n  \in R^\times_+ \ | \ n \in \mathbb{Z} \}$ 
and the generator $\epsilon _0$ is given by 
$\epsilon _0 =\frac{t+u\sqrt{D_\theta}}{2}$, 
where 
$D_\theta$ is the  discriminant of $\theta$ and $t,u$ are the positive 
integers satisfying one of the Pell 
equations $t^2-D_\theta u^2=\pm 4$ and $\frac{t+u\sqrt{D_\theta}}{2}$ exceeds $1$ 
and is least. 
\end{pro}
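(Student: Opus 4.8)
The plan is to reduce the entire statement to an arithmetic computation about the lattice $\mathbb{Z}+\mathbb{Z}\theta\subseteq\mathbb{R}$ via Theorem \ref{thm:computation}. A Denjoy system $\mathcal{R}_{\mathbb{Z},\varphi_\theta|_\Sigma}$ is a uniquely ergodic Cantor minimal $\mathbb{Z}$-system, so it arises from a free action of the finitely generated abelian group $\mathbb{Z}$, and Theorem \ref{thm:computation} applies directly to give
\[
\mathcal{F}(\mathcal{R}_{\mathbb{Z},\varphi_\theta|_\Sigma})
= IM_{+}\bigl(T_\mu(D_m(\mathcal{R}_{\mathbb{Z},\varphi_\theta|_\Sigma}))\bigr).
\]
Thus it suffices to identify the additive subgroup $E:=T_\mu(D_m(\mathcal{R}_{\mathbb{Z},\varphi_\theta|_\Sigma}))$ of $\mathbb{R}$ and then to compute its positive inner multiplier group.

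The first key step is to show that $E=\mathbb{Z}+\mathbb{Z}\theta$. Here I would use the structure of Denjoy systems from \cite{Put} (see also \cite{Mar}, \cite{MSY}): the orientation preserving semiconjugacy $h$ with $h\circ\varphi_\theta=R_\theta\circ h$ pushes the unique invariant measure forward to Lebesgue measure on $\mathbb{T}$, and the minimal Cantor set $\Sigma$ is obtained by blowing up each point of the single orbit $Q(\varphi_\theta)=\{e^{2\pi i n\theta}:n\in\mathbb{Z}\}$. Consequently the endpoints of the arcs corresponding to clopen subsets of $\Sigma$ lie in $\{\,n\theta \bmod 1 : n\in\mathbb{Z}\,\}$, so every clopen set has $\mu$-measure of the form $m+n\theta$ for integers $m,n$. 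Since $T_\mu(D_m(\mathcal{R}))$ is the additive group generated by the measures $\mu(U)$ of clopen sets (by the Remark following the definition of $D_m$), this yields $E=\mathbb{Z}+\mathbb{Z}\theta$.

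The second step is the computation of $IM_{+}(\mathbb{Z}+\mathbb{Z}\theta)$. Because $1\in E$, the defining conditions $t\in E$ and $t^{-1}\in E$ are automatic once $tE=E$ (as $t=t\cdot 1\in tE=E$ and likewise for $t^{-1}$), so $IM_{+}(E)=\{t\in\mathbb{R}_+^\times : tE=E\}$, which is exactly the group of positive units of the multiplier ring $\mathcal{O}:=\{s\in\mathbb{R}:sE\subseteq E\}$. If $\theta$ is not quadratic, then $t\in\mathcal{O}$ forces $t\cdot 1=a+b\theta$ and $t\theta=c+d\theta$ for integers $a,b,c,d$; eliminating $t$ gives $b\theta^2-(d-a)\theta-c=0$, and since $\theta$ is not a quadratic irrational we must have $b=0$, whence $t\in\mathbb{Z}$ and the only positive unit is $1$. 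This gives $\mathcal{F}(\mathcal{R}_{\mathbb{Z},\varphi_\theta|_\Sigma})=\{1\}$. If $\theta$ is quadratic with primitive minimal polynomial $ax^2+bx+c$ and discriminant $D_\theta=b^2-4ac$, then $\mathbb{Z}+\mathbb{Z}\theta$ is a proper fractional ideal in the real quadratic field $\mathbb{Q}(\theta)=\mathbb{Q}(\sqrt{D_\theta})$ whose coefficient (multiplier) ring $\mathcal{O}$ is precisely the quadratic order of discriminant $D_\theta$. The unit group of such an order is $\{\pm\epsilon_0^{\,n}:n\in\mathbb{Z}\}$ with $\epsilon_0>1$ the fundamental unit, and intersecting with $\mathbb{R}_+^\times$ leaves exactly $\{\epsilon_0^{\,n}:n\in\mathbb{Z}\}$; the classical description of the fundamental unit of the order of discriminant $D_\theta$ gives $\epsilon_0=\frac{t+u\sqrt{D_\theta}}{2}$, where $(t,u)$ is the least positive solution of $t^2-D_\theta u^2=\pm4$.

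I expect the main obstacle to be the first step, namely establishing $E=\mathbb{Z}+\mathbb{Z}\theta$ rigorously: one must control how the blowing-up procedure defining $\Sigma$ interacts with clopen sets and with the pushforward of Lebesgue measure, so that no extra generators appear and the full group $\mathbb{Z}+\mathbb{Z}\theta$ is realized; this is where the hypothesis $Q(\varphi_\theta)=\{e^{2\pi in\theta}:n\in\mathbb{Z}\}$ (a single rotation orbit) is used. By contrast, the identification of the multiplier ring of $\mathbb{Z}+\mathbb{Z}\theta$ with the order of discriminant $D_\theta$ and the passage to the Pell equation are standard facts from the arithmetic of quadratic orders, and the dichotomy according to whether $\theta$ is quadratic is handled by the elementary elimination argument above.
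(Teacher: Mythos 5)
Your proposal is correct and follows exactly the route of the paper's proof: reduce to $IM_{+}(T_{\mu}(D_m(\mathcal{R}_{\mathbb{Z},\varphi_{\theta}|_{\Sigma}})))$ via Theorem \ref{thm:computation}, identify this group with $\mathbb{Z}+\mathbb{Z}\theta$ using the structure theory of Denjoy systems from \cite{Put}, and then compute $IM_{+}(\mathbb{Z}+\mathbb{Z}\theta)$ by the quadratic-order/Pell-equation argument. The only difference is that you spell out the last two steps, which the paper delegates to \cite{Put} and to Corollary 3.18 of \cite{NW}, and your details (the elimination argument for non-quadratic $\theta$ and the identification of the multiplier ring with the order of discriminant $D_\theta$) are the standard ones and are sound.
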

\begin{proof}
By theorem \ref{thm:computation}, we see that 
$\mathcal{F}(\mathcal{R}_{\mathbb{Z},\varphi_{\theta} |_{\Sigma}})=IM_{+} 
(T_{\mu}(D_m(\mathcal{R}_{\mathbb{Z},\varphi_{\theta} |_{\Sigma}})))$. 
Since we have $T_{\mu}(D_m(\mathcal{R}_{\mathbb{Z},\varphi_{\theta} |_{\Sigma}}))=\mathbb{Z}+\mathbb{Z}\theta$ 
(see, for example, \cite{Put}), the same argument as in \cite{NW}(Corollary 3.18) implies 
the conclusion. 
\end{proof}
We shall show some examples. 
\begin{ex}
Let $\theta =\frac{-1+\sqrt{5}}{2}$. Then 
$\mathcal{F}(\mathcal{R}_{\mathbb{Z},\varphi_{\theta} |_{\Sigma}})=\{(\frac{1+\sqrt{5}}{2})^n:n 
\in \mathbb{Z}\}$. 
\end{ex}
\begin{ex}
Let $\theta =\sqrt{5}-2$. Then 
$\mathcal{F}(\mathcal{R}_{\mathbb{Z},\varphi_{\theta} |_{\Sigma}})=\{(\sqrt{5}-2)^n:n 
\in \mathbb{Z}\}$. 
\end{ex}
\begin{ex}
Let $\theta =\frac{1}{\sqrt{5}}$. Then 
$\mathcal{F}(\mathcal{R}_{\mathbb{Z},\varphi_{\theta} |_{\Sigma}})=\{(\sqrt{5}-2)^n:n 
\in \mathbb{Z}\}$. 
\end{ex}

We shall consider a natural generalization to $\mathbb{Z}^2$-actions of Denjoy systems. 
Let $\theta_1$ and $\theta_2$ be irrational numbers and $C$ an countable invariant subset 
of $\mathbb{T}$ under the rotations $R_{\theta_1}$ and $R_{\theta_2}$. 
We can construct a Cantor set $X$ by disconnecting the circle $\mathbb{T}$ along 
any point of $C$. Moreover there exist commutating homeomorphisms $\varphi_1$ and 
$\varphi_2$ of $X$, which are extensions of $R_{\theta_1}$ and $R_{\theta_2}$ respectively. 
See chapter 3 in \cite{CFS}, \cite{FH}, Example 4.4 in \cite{GPS3} and \cite{Put} for details 
and precise definition. 
It is known that this Cantor $\mathbb{Z}^2$-system is a uniquely ergodic 
Cantor minimal system. Moreover the invariant probability measure of this Cantor minimal 
system comes from the normalized Haar measure on $\mathbb{T}$. 
We call this Cantor system the \textit{Denjoy $\mathbb{Z}^2$-system} associated with 
$(\theta_1, \theta_2, C)$. 
We need algebraic number theory of cubic fields to compute some examples of 
fundamental groups of Denjoy $\mathbb{Z}^2$-systems. 
We refer the reader to \cite{SW} for algebraic number theory of cubic fields. 
\begin{ex}
Let $\mathcal{R}_{\mathbb{Z}^2,\varphi}$ be a Denjoy $\mathbb{Z}^2$-system associated with 
$\theta_1=\sqrt[3]{2}$, $\theta_2=\sqrt[3]{4}$ and 
$C=\{e^{2\pi i(n\sqrt[3]{2}+m\sqrt[3]{4})}:n,m\in\mathbb{Z}\}$. 
Then we have $T_{\mu}(D_m(\mathcal{R}_{\mathbb{Z}^2,\varphi}))=
\mathbb{Z}+\mathbb{Z}\sqrt[3]{2}+\mathbb{Z}\sqrt[3]{4}$. 
Since $\mathbb{Z}+\mathbb{Z}\sqrt[3]{2}+\mathbb{Z}\sqrt[3]{4}$ is the ring of 
integers on a cubic field $\mathbb{Q}(\sqrt[3]{2})$, 
$IM_{+}(T_{\mu}(D_m(\mathcal{R}_{\mathbb{Z}^2,\varphi} )))$ is the positive part of 
the unit group of $\mathbb{Q}(\sqrt[3]{2})$, which is equal to 
$\{(1+\sqrt[3]{2}+\sqrt[3]{4})^n:n\in\mathbb{Z}\}$. (See Theorem 13.6.2 in \cite{SW}.) 
Therefore Theorem \ref{thm:computation} implies 
$$\mathcal{F}(\mathcal{R}_{\mathbb{Z}^2, \varphi})=\{(1+\sqrt[3]{2}+\sqrt[3]{4})^n:n\in\mathbb{Z}\}.$$ 
\end{ex}
\begin{ex}
Let $\mathcal{R}_{\mathbb{Z}^2,\varphi}$ be a Denjoy $\mathbb{Z}^2$-system associated with 
$\theta_1=\sqrt[3]{3}$, $\theta_2=\sqrt[3]{9}$ and 
$C=\{e^{2\pi i(n\sqrt[3]{3}+m\sqrt[3]{9})}:n,m\in\mathbb{Z}\}$. 
Then we have $T_{\mu}(D_m(\mathcal{R}_{\mathbb{Z}^2,\varphi}))=
\mathbb{Z}+\mathbb{Z}\sqrt[3]{3}+\mathbb{Z}\sqrt[3]{9}$. 
Since $\mathbb{Z}+\mathbb{Z}\sqrt[3]{3}+\mathbb{Z}\sqrt[3]{9}$ is the ring of 
integers on a cubic field $\mathbb{Q}(\sqrt[3]{3})$, 
$IM_{+}(T_{\mu}(D_m(\mathcal{R}_{\mathbb{Z}^2,\varphi})))$ is the positive part of 
the unit group of $\mathbb{Q}(\sqrt[3]{3})$, which is equal to 
$\{(4+3\sqrt[3]{2}+2\sqrt[3]{4})^n:n\in\mathbb{Z}\}$. 
(See \cite{SW} or Appendix B.3 in \cite{cohen}.) 
Therefore Theorem \ref{thm:computation} implies 
$$\mathcal{F}(\mathcal{R}_{\mathbb{Z}^2, \varphi})=\{(4+3\sqrt[3]{2}+2\sqrt[3]{4})^n:n\in\mathbb{Z}\}.$$ 
\end{ex}
\begin{ex}
Let $\mathcal{R}_{\mathbb{Z}^2,\varphi}$ be a Denjoy $\mathbb{Z}^2$-system associated with 
$\theta_1=2\cos{\frac{2\pi}{7}}$, $\theta_2=4\cos^2{\frac{2\pi}{7}}$ and 
$C=\{e^{2\pi i(n2\cos{\frac{2\pi}{7}}+m4\cos^2{\frac{2\pi}{7}})}:n,m\in\mathbb{Z}\}$. 
Then we have $T_{\mu}(D_m(\mathcal{R}_{\mathbb{Z}^2,\varphi})) 
=\mathbb{Z}+\mathbb{Z}2\cos{\frac{2\pi}{7}}+\mathbb{Z}4\cos^2{\frac{2\pi}{7}}$. 
Note that $2\cos{\frac{2\pi}{7}}$ is a root of the equation $x^3+x^2-2x-1=0$. 
Since $\mathbb{Z}+\mathbb{Z}2\cos{\frac{2\pi}{7}}+\mathbb{Z}4\cos^2{\frac{2\pi}{7}}$ is the 
ring of integers on a cubic field $\mathbb{Q}(2\cos{\frac{2\pi}{7}})$, 
$IM_{+}(T_{\mu}(D_m(\mathcal{R}_{\mathbb{Z},\varphi |_{\Sigma}})))$ is the positive part of 
the unit group of $\mathbb{Q}(2\cos{\frac{2\pi}{7}})$, which is equal to 
$\{(-1+2\cos{\frac{2\pi}{7}}+4\cos^2{\frac{2\pi}{7}})^n(2-4\cos^2{\frac{2\pi}{7}})^m:n,m\in
\mathbb{Z}\}$. 
(See Appendix B.4 in \cite{cohen}.) 
Therefore Theorem \ref{thm:computation} implies 
$$\mathcal{F}(\mathcal{R}_{\mathbb{Z}^2, \varphi})=\{(-1+2\cos{\frac{2\pi}{7}}+
4\cos^2{\frac{2\pi}{7}})^n(2-4\cos^2{\frac{2\pi}{7}})^m:n,m\in\mathbb{Z}\}.$$ 
\end{ex}

\section*{Acknowledgments}
The author would like to thank Professor Matui for informing him about the Gap Labeling 
Theorem. He is also grateful to Professor Watatani for his constant encouragement.

\end{document}